\renewcommand{\a}{\alpha}
\newcommand{\R}{\mathbb{R}}
\renewcommand{\bf}{\textbf} 
\renewcommand{\l}{\lambda} 
 \renewcommand{\to}{\rightarrow}
\newtheorem{thm}{Theorem}[section]
\newtheorem{prop}[thm]{Proposition}
\newtheorem{lem}[thm]{Lemma}
\newtheorem{cor}[thm]{Corollary}
\theoremstyle{definition}
\newtheorem{defn}[thm]{Definition}
\newtheorem*{rem}{Remark}
\newtheorem*{nota}{Notation}
\title{Fixed points of irreducible, displacement one automorphisms of free products}
\author{Matthew Collins}
\begin{document}
\begin{abstract}
	We consider the action of outer automorphisms on the deformation space $\mathcal{O}$ of $G$-trees given by a free product decomposition of a group $G$. We show that an irreducible, displacement 1 automorphism fixes exactly one point in $\mathcal{O}_1$ (the covolume 1 slice of $\mathcal{O}$).
\end{abstract}
	
\maketitle

\tableofcontents

\section{Introduction}

This paper can be thought of as a generalisation of a paper by Dicks \& Ventura \cite{dicksventura}, in which the authors classify the irreducible, growth rate 1 automorphisms of free groups $F_n$. In the process of doing so, they show that each of these automorphisms can be represented by a graph automorphism of a graph with fundamental group $F_n$. When combined with the results of \cite{bestvinahandel} and \cite{francavigliamartino2015}, this means that an irreducible, growth rate 1 automorphism of a free group $F_n$ fixes a single point in Culler-Voghtmann space $CV_n$. The main result of this paper is a generalisation of this result: we prove that an irreducible, growth rate 1 automorphism of a free product $G=G_1*\ldots* G_k* F_r$ fixes a single point in the deformation space $\mathcal{O}_1$.

The free group version of this result is stated explicitly in \cite[p.10, Thm 3.8]{francavigliamartino2021}, and it follows from Dicks \& Ventura's classification like so: Every irreducible outer automorphsim of $F_n$ is topogically represented by an irreducible train track map $f$ on a graph in Culler-Vogtmann space $CV_n$ \cite[p.9, Thm 1.7]{bestvinahandel}. If the automorphism is growth rate 1, then $f$ is a finite order homeomorphism -  in this case, a graph automorphism. Thus the graph automorphisms found by Dicks \& Ventura in \cite{dicksventura} are in fact train track maps.

The minimally displaced set in $CV_n$ of an irreducible automorphism coincides exactly with the set of points which support train track maps \cite[p.32, Thm 8.19]{francavigliamartino2015}. Additionally, it can be shown that the growth rate of an irreducible automorphism is equal to its displacement, and if the automorphism has displacement 1, then the minimally displaced set is actually a fixed point set. It follows that the graphs supporting the train track maps found by Dicks \& Ventura are fixed points in Culler-Voghtmann space.

The final step is to show the uniqueness of these points, which is done in the proof of \cite[p.10, Theorem 3.8]{francavigliamartino2021}.

Our generalisation to free products follows a similar outline - however, we instead use the deformation space $\mathcal{O}$, otherwise known as \emph{outer space}, which is a generalisation of Culler-Vogtmann space to free products $G=G_1*\ldots*G_k*F_r$. The notion of a deformation space was first introduced by Forester \cite{forester}, and they have since been studied in \cite{cullervogtmann} and \cite{cullermorgan}. Given a group $G$, one considers minimal, cocompact, isometric actions of $G$ on metric simplicial trees. These trees, together with their actions, are called $G$-trees. Two $G$-trees are said to be equivalent if there exists an equivariant isometry between them, and one defines $\mathcal{O}$ to be the space of equivalence classes of $G$-trees which share the same set of elliptic subgroups - that is, subgroups which fix a point in the tree.

The group of outer automorphisms which preserve the set of conjugacy classes $\{[G_1],\ldots,[G_k]\}$ acts on $\mathcal{O}$ by ``twisting'' the actions of the $G$-trees. This group is denoted $\text{Out}(G,\mathcal{G})$, and we study its action on the covolume one slice of $\mathcal{O}$ (denoted $\mathcal{O}_1$) by using the asymmetric Lipschitz metric: For any two $G$-trees $T,S\in\mathcal{O}_1$, we write $\Lambda_R(T,S)$ to denote the asymmetric Lipschitz distance, or stretching factor, between them.

For an automorphism $\a\in\text{Out}(G,\mathcal{G})$, one can define the displacement of $\a$ as $\lambda_\a = \inf\{\Lambda(T,\a T)\mid T\in \mathcal{O}_1\}$. The \emph{minimally displaced set} of $\a$, $\text{Min}_1(\a)$, is the set of $G$-trees $T$ in $\mathcal{O}_1$ which realise this infimum. It can shown that if $\a$ is irreducible, then the displacement $\lambda_\a$ is not just an infimum, but it is a minimum, and hence the set $\text{Min}_1(\a)$ is non-empty. In addition, it can be shown that $\Lambda_R(T,S) = 1$ if and only if $T$ and $S$ represent the same equivalence class in $\mathcal{O}_1$ - hence $\lambda_\a = 1$ implies that $\text{Min}_1(\a)$ is the fixed-point set of $\a$.

One can think of $\mathcal{O}_1$ as a union of open simplices, where a $G$-tree's position in its simplex is determined by the lengths of its edges. In Theorem \ref{thm:permuting_edge_orbits}, we show that the action of $\a$ on each $T\in\text{Min}_1(\a)$ can be toplogically represented by an isometry of $T$, and that this isometry must cyclically permutes the $G$-orbits of edges in $T$. It follows that the edges of $T$ must all have the same length, and hence $T$ must lie at the centre of its open simplex - thus $\text{Min}_1(\a)$ must consists solely of simplex centres. However, it is shown in \cite[p.19, Cor 5.4]{francavigliamartino2018b} that $\text{Min}_1(\a)$ is connected by so-called simplicial paths. Since a nontrivial simplicial path between the centres of two simplices must pass through a point which is not at the centre of a simplex, our main result follows:

\begin{restatable*}{thm}{MSonepoint}\label{thm:min_set_single_point}
	Let $\a\in\text{Out}(G,\mathcal{G})$ be irreducible and displacement 1. Then $\text{Min}_1(\a) = \text{Fix}_1(\a)$ is a single point.
\end{restatable*}

\section{Groups acting on trees}

For the duration of this chapter, let $G$ be a group.

\subsection*{- Metric simplicial trees}

\begin{defn}
	An \emph{$\R$-tree} is a non-empty metric space in which any two points are joined by a unique arc, and in which every arc is isometric to a closed interval in the real line.
\end{defn}

\begin{defn}
	Let $p$ be a point in a non-trivial $\R$-tree $T$.
	\begin{itemize}
		\item $p$ is called a \emph{branch point} if $T-p$ has three or more components.
		\item $p$ is called \emph{regular} if $T-p$ has exactly two components.
		\item $p$ is called \emph{external} otherwise
	\end{itemize}
	Points which are not regular are called \emph{non-regular}.
\end{defn}

\begin{defn}
	A \emph{metric simplicial tree} is an $\R$-tree whose set of non-regular points is discrete.
\end{defn}

It will be useful to give these metric simplicial trees a combinatorial structure. Let $C$ be a $1$-dimensional simplicial complex. The 1-simplices will be called \emph{edges}, and the 0-simplices will be called \emph{vertices}. One can construct metric simplicial trees from simplicial complexes as follows:

\begin{defn}
	Let $C$ be a 1-dimensional simplicial complex. The \emph{geometric realisation} of $C$ is the metric space obtained from $C$ by assigning the length of every edge in $C$ to be 1. We give the geometric realisation the path metric topology.
\end{defn}

\begin{thm}\label{thm:G-trees_come_from_complexes}
	An $\R$-tree $T$ is metric simplicial if and only if it is homeomorphic to the geometric realisation of a connected $1$-dimensional simplicial complex $C$ with trivial fundamental group.
\end{thm}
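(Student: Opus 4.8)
The statement is an equivalence whose two implications are of very different character, so I would prove them separately. For the ``if'' direction, suppose $T$ is homeomorphic to $|C|$, the geometric realisation of a connected $1$-complex $C$ with trivial fundamental group. (One checks in passing that $|C|$ is itself an $\R$-tree: a connected graph with $\pi_1=1$ is acyclic, so $|C|$ is uniquely arcwise connected and each arc, being a finite concatenation of edges and partial edges, is isometric to a closed interval.) The point I would exploit is that the trichotomy branch/regular/external is defined solely through the number of components of $T-p$, hence is a topological invariant, as is the property of a subset being discrete; both therefore transfer across the homeomorphism. A local inspection in $|C|$ shows that interior points of edges and valence-$2$ vertices are regular, while a vertex of valence $\ne 2$ is non-regular. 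Thus the non-regular points of $|C|$ are exactly the vertices of valence $\ne 2$, a subset of the discrete vertex set and so discrete; transporting this back, the non-regular points of $T$ are discrete, i.e.\ $T$ is metric simplicial.

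For the ``only if'' direction I would construct $C$ by hand. Let $V_0$ be the set of non-regular points, which I take to be closed and discrete (no accumulation points in $T$). Each component of the open set $T\setminus V_0$ consists of regular points, hence is a connected $1$-manifold inside a tree, and since $T$ contains no embedded circle every such component is an open arc. I would parametrise each of its two ends by arclength; as arcs are \emph{isometric} to intervals, an end either converges to a point of $V_0$ or runs off without a limit in $T$ (an infinite ray, or a non-convergent Cauchy ray when $T$ is incomplete). Along each non-converging end I insert subdivision points forming a closed, locally finite sequence (for example at integer arclengths). Taking $V=V_0$ together with all subdivision points yields a closed, locally finite set, and I let $C$ be the $1$-complex with vertex set $V$ whose edges are the closures of the components of $T\setminus V$, each of which is now a compact arc with two distinct endpoints in $V$.

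It remains to check that $C$ works. It is a genuine $1$-dimensional complex since each edge has two distinct endpoints and, by uniqueness of arcs, no two edges share both; it is connected because $T$ is; and it is acyclic, so $\pi_1(C)=1$, because an $\R$-tree is contractible and contains no embedded circle. Finally I would build the homeomorphism $|C|\to T$ by sending each unit edge of $|C|$ homeomorphically onto its arc in $T$ and gluing over the shared vertices: the arcs cover $T$ and meet only in $V$, giving a continuous bijection, which is a local homeomorphism because the edges of $C$ at a vertex $p$ biject with the components of $T-p$, so the star of $p$ in $|C|$ and a small neighbourhood of $p$ in $T$ are cones on the same direction set. A continuous open bijection is a homeomorphism, which finishes this direction.

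I expect the genuine obstacle to sit in this last step rather than in any algebra. The delicate case is a branch point of infinite valence: one must verify that the path-metric cone on an infinite direction set is homeomorphic to the corresponding neighbourhood in $T$, and it is precisely the closed discreteness of $V$ together with the subdivision into edges of uniformly bounded length that keeps vertices from accumulating and keeps the glued map continuous in the path metric. The secondary subtlety is the treatment of non-compact and metrically incomplete ends, which must be subdivided so that every edge is a compact arc between honest vertices while the vertex set stays locally finite. The remaining bookkeeping, namely that the chosen arcs partition $T$ up to shared endpoints so that the edgewise maps assemble into a single bijection, is routine.
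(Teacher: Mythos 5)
The paper states this theorem without proof, treating it as background, so there is no argument of record to set against yours; judged on its own terms, your proof is essentially correct and its architecture (transfer the topological trichotomy for the easy direction; for the converse, take $V_0$ to be the non-regular points, observe that components of the complement are open arcs, subdivide non-convergent ends, and glue edgewise homeomorphisms) is the natural one. Your decision to read ``discrete'' as \emph{closed and} discrete is right and worth making explicit: under the weaker reading the statement fails (attach a spoke to the real line at each point $1/n$; the branch points then accumulate at the regular point $0$, whereas the vertex set of a unit-edge geometric realisation is $1$-separated, hence closed), and the paper's remark following the theorem, which imposes $\inf\{L(e)\mid e \text{ incident to } v\}>0$ at each vertex, confirms that this is the intended meaning.

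Two points need repair, both local. First, your subdivision recipe ``at integer arclengths'' does nothing for a finite-length non-convergent (Cauchy) end of an incomplete tree: an end of length $\ell<1$ would acquire no subdivision points, leaving a half-open ``edge'' with only one endpoint in $V$. You must choose subdivision points cofinal in the end, say at arclengths $\ell(1-2^{-n})$; these form a Cauchy sequence with no limit in $T$, hence are still closed and discrete, and the resulting infinite ray of unit edges in the realisation is homeomorphic to the half-open arc, which is exactly what is needed. Second, at the gluing step, recall that the geometric realisation carries the path-metric topology, not the weak topology, so continuity at a vertex $v$ of infinite valence is not automatic from continuity on each closed edge: if the incident arcs in $T$ have unbounded lengths, the arclength-linear parametrisations are not equicontinuous at $v$, and your phrase ``edges of uniformly bounded length'' is neither something your construction arranged nor necessary. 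The fix uses exactly the closed discreteness you isolated: choose $r_v>0$ with $B(v,r_v)\cap V\subseteq\{v\}$, so that $B(v,r_v)$ is a hedgehog whose spokes all have length exactly $r_v$, just as the $\tfrac{1}{2}$-ball about $v$ in the realisation is a hedgehog with spokes of length $\tfrac{1}{2}$; map these radially (one scale factor per vertex, so continuity at $v$ is immediate) and interpolate on the middle of each edge by an arbitrary homeomorphism. With these two adjustments your argument is complete.
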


\begin{rem}
	There exists an alternative way to view this construction. Let $T$ and $C$ be as in Theorem \ref{thm:G-trees_come_from_complexes}. Then one can think of $T$ as being obtained from $C$ by assigning a length $L(e)$ (not necessarily 1) to every edge $e$ in $C$, and to ensure discreteness of the non-regular points we impose the condition that for every vertex $v\in C$, $\text{inf}\{L(e)\mid e \text{ is incident to } v\}> 0$.
\end{rem}

It is important to note that if $T$ is a metric simplicial tree, then the geometric realisation to which $T$ is homeomorphic to is not unique: Dividing any edge of $C$ into two by adding a new vertex will result in a new simplicial complex whose geometric realisation is also homeomorphic to $T$. Thus we have a degree of choice over the structure of our trees. Once a choice of $C$ has been made, we shall simply say ``$e$ is an edge of $T$" to mean that $e$ is an edge of $C$, and similar for vertices.

When $T$ is acted upon by a group $G$, the action can be used to determine our structure, as described in the next section. The conventional way of doing this, which we shall also be using, is described in the next section.

\subsection*{- $G$-trees}\label{sec:1.2}

\begin{defn}
	Let $T$ be a metric simplicial tree with an underlying simplicial complex. We define a  \emph{subforest} of $T$ to be a subspace given by a set $E'$ of edges and a set $V'$ of vertices, such that the incident vertices of every edge in $E'$ lie in $V'$.
	
	A \emph{subtree} of $T$ is a connected subforest.
\end{defn}

What this definition means is that we are defining subforests and subtrees so that they respect the underlying simplicial complex. In general this is not required, but it suits our purposes for this paper.

\begin{defn}\label{defn:many_adjectives}
	Suppose $G$ acts on a metric simplicial tree $T$ with an underlying simplicial complex.
	\begin{itemize}
		\item The action is said to be \emph{simplicial} if it maps vertices to vertices and edges to edges.
		
		\item If no edge of $T$ is sent to its inverse by any element of $G$, we say that \emph{$G$ acts without inversions}.
		
		\item We say that both $T$ and the action are \emph{minimal} if $T$ contains no proper, $G$-invariant subtree.
		
		\item Let $x\in T$. The \emph{stabiliser $\text{stab}(x)$ of $x$} is defined to be the subgroup $\{g\mid x\cdot g = x\}$ of $G$.
		
		Let $e$ be an edge of $T$. Then we define the the \emph{stabiliser $\text{stab}(e)$ of $e$} to be the subgroup of $G$ which fixes $e$ but does not necessarily preserve the orientation of $e$.
		
		If every edge in $T$ has trivial stabiliser, we say that $T$ is \emph{edge-free}.
		
		Let $p$ be a vertex in $T$. If $\text{stab}(p) = 1$, we say that $p$ is \emph{free}. Otherwise, it is \emph{non-free}.
	\end{itemize}	
\end{defn}

\begin{defn}\label{defn:G-tree}
	A \emph{$G$-tree} is a triple $(T,d_T,\cdot)$, where $T$ is a metric simplicial tree, $d_T$ is the metric on $T$, and $\cdot$ is an isometric group action $T\times G \to T,\ (x,g)\mapsto x\cdot g$.
	
	If the metric and action are obvious from context, we may choose to omit one or both of  them from the notation.
\end{defn}

\begin{rem}
	We have chosen to define $G$-trees with a right action so that $\forall x\in T$, $\forall g\in G$, and $\forall H\leq G$, $\text{stab}(x\cdot g) = \text{stab}(x)^g$ and $\text{Fix}(H^g) = \text{Fix}(H)\cdot g$.
	
	Had we chosen to act on the left, acting by $g$ would have caused the stabilisers to be conjugated by $g^{-1}$, and similar for the fixed point sets.
\end{rem}

We are now ready to choose a simplicial structure for our $G$-trees. Let $T$ be a $G$-tree. Then the \emph{simplest} structure on $T$ - that is, the structure containing the fewest vertices - is obtained by defining the vertex set to be the set of non-regular points of $T$. One then takes the edge set to be the set of simple arcs between elements of the vertex set which do not contain any other vertices.

Using the simplest structure, the action of $G$ on $T$ is simplicial. However, some edges may be sent to their inverses by elements of $G$. Future calculations will be easier if we have an action without inversions, therefore we shall instead use the following structure:

\begin{itemize}
	\item We define the vertex set to be the set of non-regular points of $T$, together with the midpoints of all the edges of the simplest structure which were inverted by an element of $G$. We denote this vertex set by $V(T)$.
	
	\item We then define the edge set to be the set of simple arcs between elements of the vertex set which do not contain any other vertices. We denote this edge set by $E(T)$.
\end{itemize}

Essentially, we divide each inverted edge into two edges by placing a new vertex at its midpoint. With this new structure the action is still simplicial and, in addition, it is without inversions. This simplicial structure is used by we shall be giving to all $G$-trees throughout this paper.

\begin{rem}
	$V(T)$ as defined above is a discrete set, and hence is a well defined vertex set.
\end{rem}

The following two propositions follow immediately from this choice of simplicial structure.

\begin{prop}\label{prop:no_degree_1}
	Minimal $G$-trees do not contain any degree 1 vertices (and hence the set of non-regular points is exactly the set of branch points).
\end{prop}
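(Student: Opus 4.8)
The plan is to argue by contradiction, using the minimality hypothesis directly: I would assume that a minimal $G$-tree $T$ possesses a degree $1$ vertex and then build from it a proper, $G$-invariant subtree, contradicting minimality. Observe first that, under the chosen simplicial structure (vertices $=$ non-regular points), a vertex $v$ has as many local components of $T-v$ as its degree, so a degree $1$ vertex is exactly an external point and a degree $2$ vertex is exactly a regular point. Hence the non-regular points that are not branch points are precisely the degree $1$ vertices, and the two assertions of the proposition reduce to the single claim that a minimal $T$ has no external points.

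The key is to prune all the offending vertices simultaneously. Let $D$ be the set of all degree $1$ vertices of $T$. Since $G$ acts simplicially and isometrically, every $g\in G$ sends vertices to vertices and edges to edges preserving incidence, and so preserves vertex degrees; therefore $D$ is $G$-invariant. Now set $V'=V(T)\setminus D$ and let $E'$ be the set of edges of $T$ both of whose endpoints lie in $V'$, and let $T'$ be the resulting subspace. By construction $(V',E')$ satisfies the incidence requirement, so $T'$ is a subforest; it is $G$-invariant because $D$ is; and it is proper as soon as $D\neq\varnothing$.

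The substance of the argument is to verify that $T'$ is a nonempty subtree, for then minimality is violated. For connectedness I would use the fact that a degree $1$ vertex can only be an endpoint, never an interior point, of a geodesic; consequently the unique arc joining any two points of $T'$ meets no vertex of $D$ and so lies entirely in $T'$, which makes $T'$ connected. For nonemptiness I would note that $T'=\varnothing$ forces every vertex of $T$ to have degree at most $1$, which for a connected simplicial tree leaves only a single point or a single edge; in either case $T$ already contains a proper $G$-invariant subtree — a fixed vertex, or the fixed midpoint of the would-be inverted edge (this midpoint exists precisely because the chosen structure is without inversions) — so $T$ is not minimal. Thus whenever $T$ is minimal and $D\neq\varnothing$, the pruned $T'$ is a proper, nonempty, $G$-invariant subtree, contradicting minimality; hence $D=\varnothing$.

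I expect the only genuinely delicate point to be the nonemptiness step, namely disposing of the degenerate single-point and single-edge trees, since these are exactly the configurations in which naive pruning annihilates the whole tree; it is the no-inversions convention built into the simplicial structure that makes these cases fall cleanly under minimality. The $G$-invariance and connectedness steps are routine once the pruning is arranged to respect the underlying simplicial complex.
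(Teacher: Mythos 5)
Your argument is correct and is precisely the leaf-pruning argument the paper leaves implicit when it asserts the proposition ``follows immediately'' from the choice of simplicial structure: the set $D$ of degree-one vertices is $G$-invariant, and deleting it leaves a proper, nonempty, $G$-invariant subtree, contradicting minimality. The one wobble is your handling of the degenerate case --- a single-point tree is in fact minimal (it contains no proper invariant subtree at all), so your claim that it ``already contains a proper $G$-invariant subtree'' is false there --- but since its unique vertex has degree $0$ rather than $1$, that case never arises under your standing assumption $D\neq\varnothing$ (which forces every vertex to have degree exactly one, hence $T$ a single edge), and the slip is harmless.
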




\begin{prop}\label{prop:no_free_degree_2}
	$G$-trees do not contain any free vertices of degree 2.
\end{prop}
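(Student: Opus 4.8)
The plan is to prove the statement directly by classifying the degree-$2$ vertices produced by our chosen simplicial structure. Recall that $V(T)$ consists of exactly two kinds of point: the non-regular points of $T$, and the midpoints of those edges of the simplest structure that are inverted by some element of $G$. First I would record the elementary fact that, for a vertex $p$ of a metric simplicial tree, the degree of $p$ (the number of edges of $E(T)$ incident to it) equals the number of connected components of $T-p$. Granting this, a degree-$2$ vertex is precisely a point $p$ with $T-p$ having exactly two components, i.e.\ a \emph{regular} point in the sense of the branch-point/regular/external trichotomy.

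The heart of the argument is then the observation that a regular point cannot have entered $V(T)$ as a non-regular point, so it must have been added as the midpoint of an inverted edge. Concretely, let $p\in V(T)$ have degree $2$. By the previous step $p$ is regular, and since every non-regular point is by definition not regular, the only way $p$ can lie in $V(T)$ is as the midpoint of an edge $e$ of the simplest structure for which there exists $g\in G$ with $e\cdot g=\ol{e}$. This $g$ is nontrivial (it reverses the orientation of $e$) and fixes the midpoint $p$, so $g\in\text{stab}(p)\setminus\{1\}$. Hence $\text{stab}(p)\neq 1$, i.e.\ $p$ is non-free. As $p$ was an arbitrary degree-$2$ vertex, no degree-$2$ vertex can be free, which is the claim.

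I expect the only genuinely delicate point to be the bookkeeping in the first step rather than any deep idea: one must make sure that ``degree'' is measured in the \emph{refined} structure $E(T)$ (the one built to be without inversions) and that the correspondence between degree and the number of components of $T-p$ is applied there. In particular I would double-check that splitting each inverted edge at its midpoint does turn the formerly-regular midpoint into an honest degree-$2$ vertex with two incident half-edges, so that these midpoints are exactly the degree-$2$ vertices we are cataloguing, and that no genuine branch point (degree $\geq 3$) or external point (degree $\leq 1$) accidentally acquires degree $2$. Once that is pinned down, the identification of the nontrivial stabiliser element is immediate from the definition of an inverted edge, and the proposition follows.
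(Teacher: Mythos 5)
Your argument is correct and is exactly the reasoning the paper has in mind: the paper gives no written proof, stating only that the proposition ``follows immediately from this choice of simplicial structure,'' and your classification of degree-$2$ vertices as necessarily being midpoints of inverted edges (hence stabilised by the nontrivial inverting element) is the intended justification. No issues.
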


\subsection*{- Equivalence}

\begin{defn}
	Let $(T,d_T,\cdot),\ (S,d_S,*)$ be $G$-trees. We say a map of trees $f:T\to S$ is a $G$-\emph{equivariant} map from $(T,d_T,\cdot)$ to $(S,d_S,*)$ if $f(x\cdot g) = f(x)*g$ for all $x\in T$, for all $g\in G$.
\end{defn}

\begin{defn}
	Two $G$-trees $(T,d_T,\cdot )$, $(S,d_S,*)$ are said to be \emph{equivalent} if there exists a $G$-equivariant isometry between them. We write $(T,d_T,\cdot)\sim (S,d_S,*)$ to denote equivalence.
\end{defn}

\begin{defn}
	We say a map of $G$-trees is \emph{simplicial} if it maps vertices to vertices. Note that it does not have to map edges to edges, and hence this definition differs from that of a simplicial group action.
\end{defn}

\section{Bass-Serre Theory}

\subsection*{- Graphs of Groups}

\begin{defn}\cite[p.113]{cohen}
	A \emph{graph} $Y$ consists of the following:
	\begin{itemize}
		\item Two disjoint sets $V(Y)$ and $E(Y)$, called the \emph{vertex} and \emph{edge} sets of $Y$ respectively.
		\item A function $\overline{\phantom{e}}: E(Y)\to E(Y)$ such that, for all $e\in E(Y)$, $\overline{e}\neq e$ and $\overline{\overline{e}} = e$.
		\item A function $\iota: E(Y)\to V(Y)$, and another function $\tau: E(Y)\to V(Y)$ defined by $\tau e := \iota \overline{e}$. We call $\iota e$ the \emph{initial vertex} of $e$, and $\tau e$ the terminal vertex of $e$.
	\end{itemize}
	We say $Y$ is \emph{finite} if $V(Y)$ and $E(Y)$ are both finite.
\end{defn}

Graphs defined in this way - by considering each unoriented edge as a pair of oriented edges $(e,\overline{e})$ - are often referred to as \emph{Serre graphs}.

\begin{defn}
	A \emph{metric graph} is a graph $Y$ together with a length function $L:E(Y)\to \R$ such that, for all edges $e$ of $Y$, $L(e)=L(\overline{e})$. This length function induces a metric $d_Y$ on $Y$.
\end{defn}

Diverting briefly back to the previous chapter, we remark that a 1-dimensional simplicial complex can be thought of as a Serre graph by considering each 1-simplex to be an edge pair. Thus $G$-trees (and their quotients) can be thought of as metric graphs, and depending on context we may treat them as such. This allows us to make the following observations:

\begin{prop}\label{prop:G_acts_via_g.autos}
	$G$ acts on $G$-trees via graph automorphisms (without inversions).
\end{prop}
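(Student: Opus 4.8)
The plan is to verify that each element of $G$ respects every piece of the Serre-graph structure $(V(T), E(T), \overline{\phantom{e}}, \iota, \tau)$ we have fixed on $T$, and moreover that it never reverses an edge. Throughout I would exploit the fact that, by Definition \ref{defn:G-tree}, each $g\in G$ acts as an isometry of $(T,d_T)$, and hence as a homeomorphism of the underlying $\mathbb{R}$-tree. The first thing to record is the topological consequence of this: a homeomorphism of $T$ preserves, for each point $p$, the number of connected components of $T-p$, and therefore carries branch points to branch points, regular points to regular points, and external points to external points. In particular $g$ permutes the set of non-regular points of $T$.

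Next I would upgrade this to the statement that $g$ permutes the whole vertex set $V(T)$. Recall $V(T)$ consists of the non-regular points together with the midpoints of those edges of the simplest structure that are inverted by some element of $G$; the non-regular part is handled by the previous paragraph. For the added midpoints I would show that $g$ permutes the inverted edges of the simplest structure: if $e\cdot h=\overline{e}$ for some $h\in G$, then, since an isometry commutes with edge-reversal, $(e\cdot g)\cdot(g^{-1}hg)=(e\cdot h)\cdot g=\overline{e}\cdot g=\overline{e\cdot g}$, so $e\cdot g$ is inverted by $g^{-1}hg$. As an isometry sends the midpoint of an arc to the midpoint of its image, $g$ permutes the added midpoints, and hence restricts to a bijection of $V(T)$ (its inverse $g^{-1}$ does likewise). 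From here the edge structure follows easily: an edge of $E(T)$ is a simple arc whose endpoints lie in $V(T)$ and whose interior meets no vertex, and since $g$ is a homeomorphism bijecting $V(T)$, the image of such an arc is again such an arc. Continuity sends endpoints to endpoints, giving compatibility with $\iota$ and $\tau$, while reversal is preserved because $g$ is an isometry; thus $g$ acts as a graph automorphism.

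The genuinely delicate point — and the reason the midpoint construction was introduced — is the absence of inversions, so that is where I would concentrate. Each edge of $E(T)$ is either a full edge of the simplest structure that was never inverted, or one of the two half-edges obtained by splitting an inverted edge at its midpoint. A full edge cannot be inverted, by the very choice just made. For a half-edge $f$ I would argue by contradiction: if $f\cdot g=\overline{f}$, then $g$ would interchange the two endpoints of $f$, one of which is the added midpoint $m$ and the other an endpoint of the original simplest-structure edge. But $m$ lies in the interior of an edge of the simplest structure, whose interior consists entirely of regular points, so $m$ is \emph{regular}, whereas the other endpoint is a non-regular point. This contradicts the fact, established above, that $g$ preserves the partition of points into regular and non-regular. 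Hence no edge of $E(T)$ is inverted.

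I expect this final verification to be the main obstacle, not because it is computationally hard but because it is where the construction of $V(T)$ must be used carefully: one has to be certain that the added midpoints, although promoted to vertices, remain \emph{regular} points of the ambient tree, so that the regular/non-regular dichotomy preserved by isometries can be invoked to rule out inversions. Once that observation is in hand, the conclusion that $G$ acts by graph automorphisms without inversions is immediate.
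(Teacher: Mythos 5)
Your proof is correct, and it follows the route the paper intends: the paper states this proposition without proof, treating it as immediate from the construction of the simplicial structure on $G$-trees (non-regular points plus midpoints of inverted edges), and your argument simply verifies that claim in detail. In particular, your key observation for ruling out inversions --- that each half-edge joins a regular point (the added midpoint) to a non-regular point, and an isometry preserves the regular/non-regular dichotomy --- is exactly the reason the midpoint subdivision eliminates inversions.
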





\begin{prop}\label{prop:finite_iff_compact}
	Let $T$ be a metric simplicial tree. Then $T/G$ is finite if and only if it is compact (under the path metric topology).
\end{prop}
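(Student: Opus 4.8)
The plan is to establish the two implications separately, regarding $T/G$ as a metric graph with vertex set $V(T)/G$, edge set $E(T)/G$, and an induced length function $L$, which is well defined because $G$ acts on $T$ by isometries and hence permutes the edges within each orbit isometrically (Proposition \ref{prop:G_acts_via_g.autos}). I would first record that $T/G$ inherits the two defining features of a metric simplicial tree: its non-regular points form a discrete set, and at each vertex $\ov v$ one has $\inf\{L(\ov e)\mid \ov e \text{ incident to } \ov v\}>0$. Both descend from the corresponding properties of $T$ together with the fact that the star of $\ov v$ is the quotient of the star of a lift $v$ by the isometric action of $\text{stab}(v)$.

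For the direction \emph{finite $\Rightarrow$ compact}, I would simply observe that each edge $\ov e$ is isometric to the compact interval $[0,L(\ov e)]$, since $L$ takes finite positive values. If $E(T)/G$ is finite then $T/G$ is the union of these finitely many compact arcs, identified along the finitely many vertices, and a finite union of compact sets is compact.

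For \emph{compact $\Rightarrow$ finite} I would argue by contraposition using sequential compactness. Suppose $E(T)/G$ is infinite, and for each edge $\ov e_i$ choose its midpoint $m_i$; these are distinct points of the compact space $T/G$, so after passing to a subsequence we may assume $m_i\to m$. The crux is the claim that some ball $B(m,r)$ meets only finitely many edges, which I would prove by cases. If $m$ is regular it has a neighbourhood isometric to an interval lying inside a single edge. If $m$ is non-regular, discreteness gives a neighbourhood of $m$ containing no other non-regular point, so for $r$ smaller than $\delta_0:=\inf\{L(\ov e)\mid \ov e \text{ incident to } m\}>0$ the ball $B(m,r)$ is a union of initial segments of the edges through $m$. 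Moreover $m$ must have finite degree: if infinitely many edges of length $\geq\delta_0$ emanated from $m$, the points at distance $\delta_0/4$ along them would form an infinite $(\delta_0/2)$-separated set, contradicting compactness. In every case $B(m,r)$ therefore meets only finitely many edges. Since distinct edges share only vertices, $m_i\in B(m,r)$ forces $\ov e_i$ to be one of these finitely many edges, so only finitely many of the $m_i$ can lie in $B(m,r)$, contradicting $m_i\to m$. Hence $E(T)/G$ is finite, and since $T/G$ is connected (being a quotient of the connected tree $T$) its vertex set, being the set of endpoints of finitely many edges, is finite as well.

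The main obstacle is precisely this local analysis at the accumulation point $m$: one must rule out both an infinite-degree limit vertex and the possibility that the edges $\ov e_i$ have lengths shrinking to $0$ as they crowd towards $m$. This is exactly where the two structural hypotheses are indispensable, since discreteness of the non-regular points prevents the edge endpoints from accumulating while the positive-infimum condition prevents the incident edge lengths from degenerating. The care therefore lies in verifying that these hypotheses genuinely descend from $T$ to the quotient $T/G$ and in treating the regular, finite-degree, and infinite-degree cases uniformly.
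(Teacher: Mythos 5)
The paper states Proposition~\ref{prop:finite_iff_compact} without proof, so there is nothing to compare your argument against; judged on its own it is correct and complete. Your finite~$\Rightarrow$~compact direction (a finite union of compact arcs) is immediate, and your contrapositive for the converse is the right argument: the accumulation point $m$ of the edge midpoints is handled correctly, since the positive infimum of incident edge lengths descends from $T$ to $T/G$ (the lengths of edges at $\ov v$ form a subset of the lengths at a lift $v$), the separated-set argument legitimately bounds the degree of $m$ (any path between interior points of distinct edges must exit through an endpoint, so the pairwise distances are genuinely bounded below), and a ball of radius less than $\delta_0$ about $m$ then meets only finitely many edges. The one remark I would add is that your appeal to ``discreteness of the non-regular points of $T/G$'' is not actually needed: the positive-infimum condition at $m$ already guarantees that a small ball about $m$ contains no other vertex, which is all your case analysis uses.
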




We now return to defining graphs of groups.

\begin{defn}\cite[p.198]{cohen}
	A \emph{graph of groups} $X$ consists of:
	\begin{itemize}
		\item[(i)] An connected graph $Y$
		\item[(ii)] A group $G_v$ for each vertex $v$ of $Y$, and a group $G_e$ for each edge $e$ of $Y$ such that $G_{\overline{e}} = G_e$.
		\item[(iii)] For each edge $e$ of $Y$, a monomorphism $\rho_{e}:G_{e}\to G_{\tau e}$, where $\iota e$ and $\tau e$ are the endpoints of $e$.
	\end{itemize}
	If $Y$ is a metric graph, then we say $X$ is a \emph{metric} graph of groups.
\end{defn}

Let $X$ be a graph of groups on a graph $Y$. One can define the fundamental group of $X$ in a similar manner to that of a standard graph, by thinking of elements of the group as reduced loops in the graph. However, some additional structure is added by the edge and vertex groups. We shall use a definition adapted from \cite[p.198]{cohen}, restricted to the case where $X$ has trivial edge groups (and hence trivial monomorphisms $\rho_{e}$).

Let $Y_0$ be a spanning tree in $Y$. Then one can define the fundamental group of $X$ to be 

\begin{align*}
	\pi_1(X) = \frac{\big(\Asterisk_{v\in V(Y)}G_v\big)*F(E(Y))}{N}
\end{align*}

where $N$ is the normal closure of the set $\{e\overline{e}\mid e\in E(Y)\}\cup \{e\in Y_0\}$. This can be simplified into the form $\pi_1(X) \cong G_1*\ldots*G_k*F_r$, where $G_1,\ldots,G_k$ are the nontrivial vertex groups, and $F_r\cong \pi_1(Y)$ is a free group of rank $r\geq 0$. This definition does not depend upon the choice of $Y_0$.

We will be working with graphs of groups whose fundamental group is isomorphic to a particular group $G$. Thus we consider pairs $(X,\phi)$, where $X$ is a graph of groups and \\$\phi:G\to \pi_1(X)$ is an isomorphism. Such a pair is called a \emph{marked graph of groups}, and $\phi$ is called the \emph{marking}.

\subsection*{- The Quotient Graph of Groups}

Given a $G$-tree $T$, we can construct from it a metric graph of groups called the \emph{quotient graph of groups}. A comprehensive method for constructing a quotient graph of groups from an arbitrary connected graph acted upon by $G$ can be found on pages 204-205 of \cite{cohen}. For this paper, we shall restrict his construction to edge-free $G$-trees.

Let $T$ be a $G$-tree. Take the quotient graph $T/G$, and let $p:T\to T/G$ be the projection map, and $Y_0$ a maximal tree of $T/G$. Let $j:Y_0\to T$ be a map such that $p\circ j$ is the identity on $Y_0$ (i.e $j$ is a lift of $Y_0$ to $T$). We call $j(Y_0)$ a \emph{representative tree} for the action.

We then define a graph of groups $X$ on $T/G$ as follows: For any vertex $x$ of $T/G$, we define the vertex group $G_x$ to be $\text{stab}(j(x))$. We take all edge groups, and hence all edge monomorphisms, to be trivial, and edges inherit their lengths from $T$. This completely defines $X$.

The metric on $X$ is given by assigning the length of each edge $e$ of $X$ to be the length of its corresponding edge orbit in $T$.

\begin{thm}\cite[p.210, Thm 26 (iii)]{cohen}\label{thm:fundamental_group_is_G}
	Let $T$ be a $G$-tree, and let $X$ be a quotient graph of groups for $T$. Then the fundamental group of $X$ is isomorphic to $G$.
\end{thm}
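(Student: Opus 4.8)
This is the fundamental theorem of Bass--Serre theory as formulated by Cohen, and I would prove it in the general setting of an arbitrary $G$-tree $T$, whose quotient graph of groups $X$ has edge groups $G_e = \text{stab}(\tilde e)$ that need \emph{not} be trivial. The plan is to write down an explicit homomorphism $\Phi\colon \pi_1(X)\to G$, verify that it respects the defining relations of $\pi_1(X)$ (now including the edge-group relations governed by the monomorphisms $\rho_e$), and then establish bijectivity using the tree structure of $T$.

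First I would fix the combinatorial data underlying $X$. Choose a spanning tree $Y_0$ of $Y=T/G$ and a lift $j(Y_0)\subseteq T$ as in the construction above, then extend to a choice of lift $\tilde v\in T$ of every vertex $v$ of $Y$ and a lift $\tilde e\in T$ of every edge $e$ with $\iota\tilde e = \widetilde{\iota e}$. Since $p(\tau\tilde e)=\tau e$, there is a unique \emph{connecting element} $g_e\in G$ with $\tau\tilde e = \widetilde{\tau e}\cdot g_e$, and we may take $g_e=1$ whenever $e\in Y_0$. The vertex group at $v$ is $G_v=\text{stab}(\tilde v)$, the edge group at $e$ is $G_e=\text{stab}(\tilde e)\leq G_{\iota e}$, and the terminal monomorphism $\rho_e\colon G_e\to G_{\tau e}$ is $a\mapsto g_e a g_e^{-1}$, which genuinely lands in $G_{\tau e}$ precisely because $\text{stab}(\tau\tilde e) = \text{stab}(\widetilde{\tau e})^{g_e}$ by the right-action identity recorded earlier.

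Next I would define $\Phi$ on generators: each vertex group $G_v=\text{stab}(\tilde v)$ includes into $G$ by the identity, and each stable letter $y_e$ is sent to the connecting element $g_e$ (so $y_e\mapsto 1$ for $e\in Y_0$). Checking the relations is then routine: the relations internal to each $G_v$ survive because these are genuine subgroups of $G$, and the edge relation $y_e\,\rho_{\bar e}(a)\,y_e^{-1} = \rho_e(a)$ for $a\in G_e$ becomes $g_e a g_e^{-1} = g_e a g_e^{-1}$ once one notes that $\rho_{\bar e}$ is the inclusion on the initial side. This is exactly the stabiliser transformation law $\text{stab}(x\cdot g)=\text{stab}(x)^g$ translated into a group identity, and it is the step that makes the nontrivial edge groups unproblematic. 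Surjectivity of $\Phi$ then follows from connectedness of $T$: for $g\in G$ one chooses an edge-path in $T$ from $\tilde v_0$ to $\tilde v_0\cdot g$, descends it to a loop in $Y$, and reads off an expression for $g$ as a product of vertex-stabiliser elements and connecting elements, which lies in the image of $\Phi$.

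The main obstacle is injectivity, and this is where the hypothesis that $T$ is a \emph{tree} (rather than a general graph) is essential. The cleanest route is the normal form theorem for graphs of groups: every nontrivial element of $\pi_1(X)$ has a reduced form $a_0\,y_{e_1}\,a_1\cdots y_{e_n}\,a_n$, and I would show that under $\Phi$ such a reduced word traces out a non-backtracking edge-path in $T$ starting at $\tilde v_0$. Because $T$ is a tree, a non-backtracking path of combinatorial length $n\geq 1$ never returns to its starting vertex, so $\Phi(w)$ moves $\tilde v_0$ and hence $\Phi(w)\neq 1$; and if $n=0$ then $w=a_0\in G_{v_0}\setminus\{1\}$ maps to itself. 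Thus $\ker\Phi = 1$. Equivalently, one may build the Bass--Serre tree $\tilde X$ of $X$, produce a $\pi_1(X)$-equivariant simplicial map $\tilde X\to T$ matching vertex and edge stabilisers, and argue it is an isomorphism of trees, whence the faithful action of $\pi_1(X)$ on $\tilde X$ forces $\Phi$ to be injective. The real content in either formulation is the no-collapse statement that reducedness is preserved by the embedding into $T$, with the nontrivial edge stabilisers entering only through the condition that successive syllables $a_i$ are not simultaneously in the image of the two edge monomorphisms; once this is in hand, the isomorphism $\pi_1(X)\cong G$ follows.
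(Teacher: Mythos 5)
The paper offers no proof of this statement: it is imported wholesale from Cohen \cite[p.210, Thm 26 (iii)]{cohen}, so there is no internal argument to measure you against. Your proposal is the standard Serre-style proof of the Bass--Serre structure theorem, and it is correct in outline: defining $\Phi\colon\pi_1(X)\to G$ by including the vertex stabilisers and sending stable letters to connecting elements, getting surjectivity from connectedness of $T$, and getting injectivity from the normal form theorem plus the no-backtracking lemma is exactly how this theorem is proved in the literature. Your conventions are also internally consistent with the paper's right actions: since $\tau\tilde e = \widetilde{\tau e}\cdot g_e$ gives $\text{stab}(\tau\tilde e) = g_e^{-1}G_{\tau e}\,g_e$, the map $a\mapsto g_e a g_e^{-1}$ does land in $G_{\tau e}$, and the edge relation collapses to a tautology under $\Phi$, as you say. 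Three remarks on the comparison. First, you prove more than this paper needs: here quotient graphs of groups are only constructed for \emph{edge-free} trees, so all edge groups are trivial, $\pi_1(X)$ is just a free product of the vertex groups with a free group, the edge relations disappear, and reducedness degenerates to forbidding subwords $y_e\,a\,y_{\bar e}$ with $a=1$, which makes your ``no-collapse'' step nearly immediate; the general-edge-group argument is a bonus, matching Cohen's actual statement, at the cost of extra machinery. Second, a bookkeeping wrinkle: you cannot choose lifts $\tilde e$ and $\tilde{\bar e}$ independently (each with initial vertex the chosen lift) while keeping $G_{\bar e}=G_e$ on the nose and $g_{\bar e}=g_e^{-1}$; one should fix a lift for one orientation of each edge pair and derive the data for the reverse orientation from it, which is what your relation check tacitly assumes when it treats $\rho_{\bar e}$ as an inclusion. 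Third, to avoid circularity, the normal form theorem you invoke must be established independently of the action on $T$ (as it is in Cohen and in Serre); with that caveat made explicit, your argument is sound.
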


This isomorphism gives a marking on $X$, and hence we can think of the quotient graph of groups as a marked graph of groups.

\subsection*{- The Universal Cover}

Conversely, let $(X,\phi)$ be a marked metric graph of groups with \\$G\stackrel{\phi}{\cong} \pi_1(X,v)$. Then we can construct from $X$ a $G$-tree called the \emph{Bass-Serre tree}, or \emph{universal cover} of $X$, denoted by $\tilde{X}$. The process of constructing the universal cover is well-documented in the literature (e.g. \cite[p.205]{cohen}), so we shall not cover it here. 






\begin{defn}
	We say that two marked metric graphs of groups are \emph{equivalent} if their universal covers are equivalent $G$-trees.
\end{defn}

\begin{thm}[Fundamental Theorem of Bass-Serre Theory]
	The process of lifting to the universal cover and the process of descending to a quotient graph of groups are mutually inverse, up to equivalence of the structures involved.
\end{thm}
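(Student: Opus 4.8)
The plan is to reduce the whole statement to a single claim about one of the two compositions. Write $\mathcal{G}(T)$ for the quotient graph of groups of a $G$-tree $T$, equipped with the marking furnished by Theorem \ref{thm:fundamental_group_is_G}, and write $\tilde X$ for the universal cover of a marked graph of groups $X$. The direction ``$\mathcal{G}(\tilde X)\sim (X,\phi)$'' is essentially formal: by definition two marked graphs of groups are equivalent exactly when their universal covers are equivalent $G$-trees, so $\mathcal{G}(\tilde X)\sim (X,\phi)$ holds if and only if $\widetilde{\mathcal{G}(\tilde X)}\sim \tilde X$ as $G$-trees. Consequently both directions follow from the single claim
\[
\widetilde{\mathcal{G}(T)}\ \sim\ T \qquad\text{for every edge-free $G$-tree $T$,}
\]
the formal direction being the special case $T=\tilde X$. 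The entire content thus lies in constructing a $G$-equivariant isometry $\Phi:\widetilde{\mathcal{G}(T)}\to T$.

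To build $\Phi$ I would unwind the construction of $\mathcal{G}(T)$: fix a maximal tree $Y_0$ of $Y=T/G$, a lift $j:Y_0\to T$ with representative tree $j(Y_0)\subseteq T$, vertex groups $G_x=\mathrm{stab}(j(x))$, trivial edge groups (as $T$ is edge-free), and edge lengths inherited from $T$. The universal cover $\widetilde{\mathcal{G}(T)}$ is assembled from the $\pi_1(\mathcal{G}(T))\cong G$-translates of a fundamental domain that projects isomorphically onto $Y$. I would define $\Phi$ on this fundamental domain by sending the copy of $Y_0$ to $j(Y_0)$, and sending each edge $e$ of $Y$ outside $Y_0$ (a stable letter, i.e. a free generator of $\pi_1(Y)$) to the edge of $T$ joining $j(\iota e)$ to the appropriate translate $j(\tau e)\cdot g_e$, then extend $G$-equivariantly via $\Phi(x\cdot g)=\Phi(x)\cdot g$. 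By construction $\Phi$ is simplicial and restricts to an isometry on each edge, since the two edge-length functions agree.

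The crux is to verify that $\Phi$ is a well-defined $G$-equivariant isometry. Well-definedness reduces to checking that the only coincidences $x\cdot g=x\cdot g'$ imposed in $\widetilde{\mathcal{G}(T)}$ along the fundamental domain are those already holding in $T$; at a vertex this is precisely the identity $\mathrm{stab}_{\pi_1}(v)=G_v=\mathrm{stab}_T(j(v))$ built into the definition of $\mathcal{G}(T)$, and on edges there is nothing to check because $T$ is edge-free. Surjectivity is clear, since $p:T\to Y$ is onto and the $G$-translates of $j(Y_0)$ together with the lifts of the edges of $Y\setminus Y_0$ meet every $G$-orbit of cells. I expect the main obstacle to be injectivity, which I would reduce to showing that $\Phi$ is locally injective, i.e. does not fold two edges at a vertex together; this follows from the same vertex-group matching, which forces $\Phi$ to restrict to a bijection between the links of $v$ and $\Phi(v)$. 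Once $\Phi$ is a local isometry of the tree $\widetilde{\mathcal{G}(T)}$ onto $T$, any pair of distinct points with equal image would produce a nondegenerate, non-backtracking (hence embedded) path whose image is a loop in $T$, contradicting that $T$ is a tree; so $\Phi$ is injective and therefore a $G$-equivariant isometry. Tracking the marking from Theorem \ref{thm:fundamental_group_is_G} throughout guarantees that $\Phi$ intertwines the correct $G$-actions, which establishes the single claim and with it both directions of the theorem.
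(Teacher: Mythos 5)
The paper offers no proof of this theorem: it records it as the classical foundational result of Bass--Serre theory, with both constructions delegated to Cohen \cite{cohen}, so there is no argument in the paper to compare yours against line by line. Your proposal is the standard proof and its outline is sound. The reduction of both directions to the single claim $\widetilde{\mathcal{G}(T)}\sim T$ is legitimate precisely because the paper \emph{defines} equivalence of marked graphs of groups via equivalence of their universal covers, and your construction of $\Phi$ on a fundamental domain, extended equivariantly, followed by the observation that a locally injective (indeed locally isometric) equivariant map onto a tree is injective, is exactly how the claim is established in the literature. Two points are glossed and deserve tightening. First, your fundamental domain does not project isomorphically onto $Y$: for $e\notin Y_0$ the lifted edge terminates at $j(\tau e)\cdot g_e$, a translate of, rather than a point of, $j(Y_0)$; this is harmless for the construction itself, but it means the equivariance of $\Phi$ is not automatic --- it holds only if the marking of Theorem \ref{thm:fundamental_group_is_G} is chosen so that the stable letter $e$ is sent to precisely the element $g_e$ and each generator of $G_v$ to the corresponding element of $\mathrm{stab}(j(v))$, so ``tracking the marking'' is where the real bookkeeping of the proof lives. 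Second, the link bijection at a vertex needs a word when $e$ projects to a loop of $Y$, i.e.\ when both endpoints of a lift of $e$ lie in one $G$-orbit: the edges at $j(v)$ lying over $e$ then fall into two $\mathrm{stab}(j(v))$-orbits distinguished by orientation, and the count matching links of $v$ and $\Phi(v)$ must be done with oriented edges; the paper's convention of subdividing inverted edges (so the action is without inversions) guarantees this count works out, but as written your appeal to ``vertex-group matching'' alone does not cover the loop case. With those two details supplied, your argument is complete and is essentially the proof the paper implicitly invokes.
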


Let $T\in\mathcal{O}$. We observe that a quotient graph of groups $(X,\phi)$ of $T$ is not unique: while the underlying graph is always $T/G$, the vertex groups depend on our choice of $j$, and hence $X$ is not unique.	Additionally, given a choice of $X$, the marking $\phi:G\to\pi_1(X)$ is not unique.

However, it follows from the fundamental theorem of Bass-Serre Theory that all possible choices of $j$ and $\phi$ give equivalent marked graphs of groups. Thus, for brevity of notation, we shall simply denote a marked graph of groups $(X,\phi)$ by $X$.

\section{Free Factor Systems and the Deformation Space}

Let $G$ be a group, and let $T$ be a $G$-tree.

\begin{defn}
	An element $g\in G$ is said to be \emph{elliptic} (with respect to $T$) if it fixes a point in $T$. If $g$ is not elliptic, we say it is \emph{hyperbolic} (with respect to $T$).
	
	We shall say a subgroup $H$ of $G$ is \emph{elliptic} (with respect to $T$) if there exists a point $x\in T$ such that $x\cdot H = x$.
\end{defn}

\begin{defn}[Free Factor System]\label{defn:freefactorsystem}
	Let $T$ be a minimal, cocompact, edge-free $G$-tree, and let $\mathcal{G}_T$ denote the set of elliptic subgroups for $T$. We say $\mathcal{G}_T$ is a \emph{free factor system} for $G$.
\end{defn}

Note that this is not the usual definition of a free factor system. The usual definition can be found in \cite[p.530-531]{bestvinafeighnhandel}, and we shall refer to it as a \emph{traditional free factor system}:

\begin{defn}[Traditional Free Factor System]
	If $G_1*\ldots*G_k*F_r$ is a free product decomposition for a group $G$, and each $G_i$ is nontrivial, then we say that the collection $\{[G_1],\ldots,[G_k]\}$ of conjugacy classes is a \emph{traditional free factor system}. The empty set $\emptyset$ is the trivial traditional free factor system.
\end{defn}

Corollory \ref{cor:ffs_equivalence} will show that the Fundamental Theorem of Bass-Serre Theory provides a natural way to construct a free factor system from a traditional free factor system, and vice versa, and that these constructions are mutually inverse. In this sense, the two definitions are equivalent.

We have chosen our definition for two reasons. Firstly, the trivial free factor system must be defined separately when using the traditional definition. Secondly, our definition allows us to order our free factor systems by inclusion, and this corresponds to the somewhat more complicated ordering used for the traditional free factor systems, as defined in \cite[p.532]{bestvinafeighnhandel}.

For now, we return to our definition of a free factor system and we observe the following properties:

\begin{lem}
	Free factor systems are closed under conjugation and taking subgroups.
\end{lem}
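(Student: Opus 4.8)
The plan is to unwind the definition of $\mathcal{G}_T$ as the collection of all subgroups of $G$ that fix a point of $T$, and then to verify each closure property directly from this characterisation. Both properties reduce to the observation that ellipticity is nothing more than the condition of fixing a point, together with how point-fixing behaves under passage to subgroups and under the group action.

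For closure under taking subgroups, I would begin with an elliptic subgroup $H \in \mathcal{G}_T$, so that there is a point $x \in T$ with $x \cdot H = x$, and let $K \leq H$. Since every element of $K$ already lies in $H$ and hence fixes $x$, we have $x \cdot K = x$, so $K$ is elliptic and therefore $K \in \mathcal{G}_T$. No further work is needed: fixing a point is manifestly inherited by subgroups, and the minimality, cocompactness and edge-freeness of $T$ play no role here.

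For closure under conjugation, I would take $H \in \mathcal{G}_T$ and $g \in G$, and again choose $x \in T$ with $x \cdot H = x$, so that $\text{Fix}(H)$ is non-empty. The right-action convention fixed earlier supplies the identity $\text{Fix}(H^g) = \text{Fix}(H) \cdot g$, which is exactly the content of the earlier remark on stabilisers and fixed-point sets. Since $x \in \text{Fix}(H)$, the point $x \cdot g$ lies in $\text{Fix}(H^g)$, so $H^g$ fixes a point of $T$ and is therefore elliptic; hence $H^g \in \mathcal{G}_T$. One can equally verify this by the one-line check $(x \cdot g) \cdot (g^{-1} h g) = x \cdot (h g) = x \cdot g$ for each $h \in H$.

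I do not anticipate a genuine obstacle in this argument, since both properties follow immediately from the point-fixing nature of ellipticity. The only step requiring any care is the direction of conjugation under the right action, which is why I would lean on the recorded identity $\text{Fix}(H^g) = \text{Fix}(H) \cdot g$ rather than re-deriving the convention from scratch; this keeps the proof short and avoids sign-of-conjugation errors.
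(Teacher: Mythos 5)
Your proof is correct and is exactly the routine verification the paper has in mind (the paper states this lemma without proof, treating it as immediate from the definition of ellipticity and the right-action identity $\text{Fix}(H^g)=\text{Fix}(H)\cdot g$). Both the subgroup and conjugation checks are sound, including the one-line computation $(x\cdot g)\cdot(g^{-1}hg)=x\cdot g$.
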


\begin{lem}
	Let $(T,\cdot),(S,*)$ be equivalent minimal, cocompact, edge-free $G$-trees. Then \\$\mathcal{G}_T=\mathcal{G}_S$
\end{lem}

\begin{lem}\label{lem:elliptics_are_vertex_stabs}
	Let $(T,\cdot)$ be an edge-free $G$-tree. A nontrivial element of $G$ cannot fix more than one point in $T$, and the fixed point will always be a vertex.
\end{lem}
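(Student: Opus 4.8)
The plan is to exploit the standard fact that the fixed-point set of an isometry of an $\R$-tree is convex, and then use edge-freeness together with the without-inversions property of the simplicial action to rule out any fixed point lying in the interior of an edge. Throughout I write $\text{Fix}(g) = \{x\in T\mid x\cdot g = x\}$.

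First I would establish that for a nontrivial $g\in G$ the set $\text{Fix}(g)$ is a subtree, i.e.\ convex. Given two fixed points $x,y$, the element $g$ is an isometry fixing the endpoints of the unique arc $[x,y]$, so for any $z\in[x,y]$ the point $z\cdot g$ lies at the same distance from $x$ and from $y$ as $z$ does; since an arc in an $\R$-tree contains a unique point at a given distance from an endpoint, we get $z\cdot g = z$. Hence the whole arc $[x,y]$ is fixed pointwise, and $\text{Fix}(g)$ is convex.

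Next I would prove the key lemma that $g$ cannot fix any point $p$ lying in the interior of an edge $e$. Because the action is simplicial, $g$ carries $e$ to an edge $e\cdot g$; since $p$ is interior to $e$ and $p\cdot g = p$ is interior to $e\cdot g$, while distinct edges of $T$ meet only in common vertices, we must have $e\cdot g = e$ as an unoriented edge. As the action is without inversions, $g$ fixes the two endpoints of $e$, and being an isometry it therefore fixes $e$ pointwise; thus $g\in\text{stab}(e)$. Edge-freeness forces $\text{stab}(e)=1$, contradicting $g\neq 1$. The theorem then follows quickly: if $g$ fixed two distinct points, then by convexity it would fix a nondegenerate arc, which necessarily contains an interior point of some edge, impossible by the key lemma; so $\text{Fix}(g)$ is at most a single point $p$. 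If that point were interior to an edge, the key lemma again yields a contradiction, so $p$ must be a vertex.

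The main obstacle I anticipate is the bookkeeping between the metric and simplicial structures: one must argue carefully that a nondegenerate fixed arc meets the interior of at least one edge, and that a simplicial automorphism fixing an interior point of an edge stabilises that edge setwise. Both rely on the fact that, in the chosen simplicial structure, distinct edges intersect only at common vertices, and the without-inversions hypothesis is precisely what upgrades \emph{stabilises setwise} to \emph{fixes pointwise}.
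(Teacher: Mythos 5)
Your proof is correct. The paper states this lemma without giving a proof, so there is nothing to compare against; your argument (convexity of $\text{Fix}(g)$, plus the observation that a fixed interior point of an edge forces $e\cdot g=e$ and hence $g\in\text{stab}(e)=1$) is the standard one and fills the gap cleanly. One small simplification: since the paper defines $\text{stab}(e)$ as the \emph{setwise} stabiliser and edge-freeness says this is trivial, you get the contradiction directly from $e\cdot g=e$, without needing the without-inversions step to upgrade to pointwise fixing.
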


The final lemma tells us that a subgroup of $G$ is elliptic with respect to a minimal, co-compact, edge-free $G$-tree, and hence is an element of the corresponding free factor system, if and only if it is a vertex stabiliser or a subgroup of a vertex stabiliser.

\begin{defn}
	Let $\mathcal{G}$ be a free factor system for $G$. The \emph{deformation space} $\mathcal{O} = \mathcal{O}(G,\mathcal{G})$ is the space of equivalence classes of minimal, cocompact, edge-free $G$-trees $T$ such $\mathcal{G}_T = \mathcal{G}$
\end{defn}

By Lemma \ref{lem:elliptics_are_vertex_stabs}, a subgroup of $G$ is in $\mathcal{G}$ if and only if it is a vertex stabiliser or subgroup of a vertex stabiliser for some (and hence every) $G$-tree in $\mathcal{O}$. Additionally, by general properties of group actions, two vertices lie in the same orbit if and only if they have conjugate stabilisers. Thus we can define a \emph{minimal generating set} for $\mathcal{G}$:

\begin{defn}
	We say a subset of a free factor system $\mathcal{G}$ is a \emph{minimal generating set} for $\mathcal{G}$ if and only if it contains exactly one vertex stabiliser from every orbit of non-free vertices in some (and hence every) $T\in\mathcal{O}$.
\end{defn}

\begin{rem}
	If $\mathcal{G}$ is non-trivial, then this definition is the conventional definition of a minimal generating set under the operations of conjugation and taking subgroups.
	
	If $\mathcal{G} = \{1\}$ - the trivial free factor system - then trees in $\mathcal{O}(G,\mathcal{G})$ do not contain any non-free vertices. Thus the minimal generating set for $\mathcal{G} = \{1\}$ is the empty set $\emptyset$.
\end{rem}

$G$-trees in $\mathcal{O}$ are cocompact, so by Proposition \ref{prop:finite_iff_compact}, $T$ will have a finite number of vertex orbits. Hence a minimal generating set for $\mathcal{G}$ will always be finite.

\begin{thm}\label{thm:factor_basis}
	The following are equivalent:
	\begin{itemize}
		\item[(i)] There exists a minimal, cocompact, edge-free $G$-tree $T$ containing a representative tree $T_0$ in $T$ such that the non-trivial vertex stabilisers in $T_0$ are exactly $G_1,\ldots,G_k$. 
		\item[(ii)] There exists a minimal, cocompact, edge-free $G$-tree $T$ and a quotient graph of groups on $T/G$ whose non-trivial vertex groups are exactly $G_1,\ldots,G_k$.
		\item[(iii)] $G$ can be written as a free product $G=G_1*\ldots*G_k*F_r$, where $F_r$ is a free group of rank $r\geq 0$.
	\end{itemize}
\end{thm}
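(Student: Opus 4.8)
The plan is to prove the chain of implications by exploiting the dictionary between $G$-trees and graphs of groups supplied by Bass--Serre theory. First I would observe that (i) and (ii) are two descriptions of a single object; then I would extract (iii) from (ii) using the explicit presentation of the fundamental group of a graph of groups recorded earlier; and finally I would reverse the process, building a graph of groups directly from a free product decomposition and passing to its universal cover.

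For (i) $\Leftrightarrow$ (ii), recall the construction of the quotient graph of groups: one chooses a maximal tree $Y_0$ of $T/G$ and a lift $j : Y_0 \to T$, sets $T_0 = j(Y_0)$, and \emph{defines} the vertex group at $x$ to be $G_x = \text{stab}(j(x))$. Thus the non-trivial vertex groups of the quotient graph of groups on $T/G$ are, by construction, exactly the non-trivial vertex stabilisers occurring in the representative tree $T_0$. Since both statements quantify over the same $G$-trees $T$, this equivalence amounts to unwinding the definitions.

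For (ii) $\Rightarrow$ (iii), I would apply the presentation of $\pi_1(X)$ stated earlier. Because a quotient graph of groups on an edge-free $G$-tree has trivial edge groups and trivial edge monomorphisms, that presentation collapses to $\pi_1(X) \cong G_1 * \cdots * G_k * F_r$, where $G_1,\dots,G_k$ are the non-trivial vertex groups and $F_r \cong \pi_1(T/G)$ is free of rank $r \geq 0$. Combining this with Theorem \ref{thm:fundamental_group_is_G}, which identifies $\pi_1(X)$ with $G$, yields the free product decomposition of $G$.

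The substance lies in (iii) $\Rightarrow$ (i). Given $G = G_1 * \cdots * G_k * F_r$, I would build a graph of groups $X$ on a star-shaped graph $Y$: a central vertex $v_0$ carrying the trivial group, edges joining $v_0$ to vertices $v_1,\dots,v_k$ with vertex groups $G_1,\dots,G_k$, and $r$ loops at $v_0$, all edge groups trivial. The presentation gives $\pi_1(X) \cong G_1 * \cdots * G_k * F_r \cong G$, so the universal cover $\tilde X$ is a $G$-tree. It is edge-free, since its edge stabilisers are conjugates of the trivial edge groups, and it is cocompact, since $\tilde X / G \cong Y$ is finite and Proposition \ref{prop:finite_iff_compact} applies; by Lemma \ref{lem:elliptics_are_vertex_stabs} its elliptic subgroups are precisely the vertex stabilisers, and the lift $T_0$ of a spanning tree of $Y$ realises $G_1,\dots,G_k$ as its non-trivial vertex stabilisers. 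The step I expect to be the main obstacle is \emph{minimality}: the universal cover of $X$ fails to be minimal exactly when some valence-one vertex carries the trivial group. I would dispose of this by noting that the valence-one vertices $v_1,\dots,v_k$ all carry the non-trivial groups $G_i$, so no edge can be equivariantly collapsed in the generic case, and by treating the remaining degenerate situations (for instance $k=1,\ r=0$, where $G=G_1$ is elliptic, or $G$ trivial or free) directly, in each case passing to the unique minimal $G$-invariant subtree and checking that the vertex-stabiliser data and the free factor system $\mathcal{G}_T$ are unchanged.
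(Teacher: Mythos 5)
Your proposal follows essentially the same route as the paper: (i)$\Leftrightarrow$(ii) by unwinding the quotient graph of groups construction via Bass--Serre theory, (ii)$\Rightarrow$(iii) from the presentation of $\pi_1(X)$ together with Theorem \ref{thm:fundamental_group_is_G}, and the converse by building exactly the same star-plus-rose graph of groups and passing to the universal cover. Your extra care over minimality and the degenerate case $k=1,\ r=0$ (which the paper also singles out) only makes explicit what the paper's proof leaves as "it can be seen".
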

\begin{proof}
	
	
	
	
	
	$(i)\Leftrightarrow(ii)$ Follows from the Fundamental Theorem of Bass-Serre Theory.
	
	$(ii)\Rightarrow(iii)$ Follows immediately from the definition of the fundamental group of a graph of groups and Theorem \ref{thm:fundamental_group_is_G}.
	
	$(iii)\Rightarrow (ii)$ We must first consider the case where $k=1$ and $r=0$ - that is to say, the trivial free product decomposition. In this case, we take $X$ to be the graph of groups consisting of a single vertex with vertex group $G$.
	
	Otherwise, we take $X$ to be the graph of groups which consists of:
	\begin{itemize}
		\item a rose with central vertex $v_{\infty}$ (with trivial vertex group) and $r$ petals.
		\item for each $i\in\{1,\ldots,k\}$:
		\begin{itemize}
			\item a vertex $v_i$ with associated vertex group $G_i$
			\item an edge $e_i$ between $v_i$ and $v_{\infty}$
		\end{itemize}
		\item trivial edge groups for every edge
	\end{itemize}
	Then $\pi_1(X) = G_1*\ldots*G_k*F_r = G$. Upon lifting to the universal cover, it can be seen that this is a quotient graph of groups for a minimal, cocompact, edge-free $G$-tree.
\end{proof}

We can now demonstrate the correspondence between free factor systems and traditional free factor systems.

Let $\mathcal{G}$ be a free factor system for a group $G$. This means that $\mathcal{G}$ is the set of elliptic subgroups of some $T\in\mathcal{O}(G,\mathcal{G})$. Take a representative tree in $T$, and let $\{G_1,\ldots,G_k\}$ be the nontrivial vertex groups of this representative tree. Then by Theorem \ref{thm:factor_basis}, $G$ can be written as a free product $G=G_1*\ldots*G_k*F_r$. Thus the set $\{[G_1],\ldots,[G_k]\}$ is a traditional free factor system. Since each $[G_i]$ is a conjugacy class, this set does not depend on the choice of representative tree.

Conversely, let $\{[G_1],\ldots,[G_k]\}$ be a traditional free factor system. Then we have \\$G=G_1*\ldots*G_k*F_r$, so by Theorem \ref{thm:factor_basis}, there exists a minimal, cocompact, edge-free $G$-tree $T$ containing a representative tree $T_0$ in $T$ such that the non-trivial vertex stabilisers in $T_0$ are exactly $G_1,\ldots,G_k$. These vertex groups give a minimal generating set for a free factor system $\mathcal{G} = \mathcal{G}_T$.

\begin{cor}\label{cor:ffs_equivalence}
	The process of constructing a free factor system from a traditional free factor system, and the process of constructing a traditional free factor system from a free factor system, are mutually inverse. In this sense, the two definitions are equivalent.
\end{cor}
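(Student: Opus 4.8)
The plan is to verify that the two constructions compose to the identity in both directions, and the linchpin of the whole argument is the observation that a free factor system $\mathcal{G}$ is completely determined by the set of conjugacy classes of its non-trivial vertex stabilisers. First I would record this fact: by Lemma \ref{lem:elliptics_are_vertex_stabs} together with the closure of $\mathcal{G}$ under conjugation and under taking subgroups, a subgroup lies in $\mathcal{G}$ precisely when it is contained in some conjugate of a vertex stabiliser; hence the conjugacy classes $\{[G_1],\ldots,[G_k]\}$ of non-trivial vertex stabilisers --- that is, the data of a minimal generating set --- recover $\mathcal{G}$ in full. Crucially, the phrase ``some (and hence every) $T\in\mathcal{O}$'' built into the definition of a minimal generating set guarantees that this set of conjugacy classes is an invariant of $\mathcal{G}$ alone, independent of the chosen tree and of the chosen representative tree within it.

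For the direction starting from a traditional free factor system $\{[G_1],\ldots,[G_k]\}$, I would run the second construction: Theorem \ref{thm:factor_basis} supplies a minimal, cocompact, edge-free $G$-tree $T$ with a representative tree $T_0$ whose non-trivial vertex stabilisers are exactly $G_1,\ldots,G_k$, and one sets $\mathcal{G}=\mathcal{G}_T$. Applying the first construction to $\mathcal{G}$, the independence just established lets me re-use this very pair $(T,T_0)$; reading off the non-trivial vertex groups of $T_0$ returns precisely $\{[G_1],\ldots,[G_k]\}$, so this composite is the identity.

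For the reverse direction, I would begin with a free factor system $\mathcal{G}$, apply the first construction to extract the conjugacy classes $\{[G_1],\ldots,[G_k]\}$ of its non-trivial vertex stabilisers, and then apply the second construction to obtain a new free factor system $\mathcal{G}'=\mathcal{G}_{T'}$, where $T'$ is the universal cover of the graph of groups assembled from $G_1,\ldots,G_k$ as in Theorem \ref{thm:factor_basis}. By construction the non-trivial vertex groups of that graph of groups are exactly $G_1,\ldots,G_k$, so a minimal generating set of $\mathcal{G}'$ is again $\{[G_1],\ldots,[G_k]\}$. Invoking the determination fact from the first paragraph, $\mathcal{G}'=\mathcal{G}$, completing the round trip.

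The step I expect to require the most care is the determination fact itself --- that $\mathcal{G}$ is recovered from its minimal generating set, uniformly across all trees in $\mathcal{O}$. Everything downstream reduces to comparing minimal generating sets, so the argument is only as sound as the ``some (and hence every)'' independence encoded in Lemma \ref{lem:elliptics_are_vertex_stabs} and in the definition of a minimal generating set; once that is pinned down, both composites collapse to routine bookkeeping about which conjugacy classes arise as non-trivial vertex stabilisers.
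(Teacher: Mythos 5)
Your proposal is correct and follows essentially the same route as the paper: the paper's own proof is a one-line appeal to Theorem \ref{thm:factor_basis}, with the two constructions and their comparison described in the surrounding discussion, and your argument simply makes explicit the verification (via Lemma \ref{lem:elliptics_are_vertex_stabs} and the tree-independence of the minimal generating set) that the paper leaves implicit. No gaps; your ``determination fact'' is exactly the right linchpin.
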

\begin{proof}	
	Follows from Theorem \ref{thm:factor_basis}.
\end{proof}



It follows from Corollary \ref{cor:ffs_equivalence} that, given a free product $G=G_1*\ldots*G_k*F_r$, we can construct a free factor system $\mathcal{G}$, and hence a corresponding deformation space $\mathcal{O}(G,\mathcal{G})$. This space can then be used to study the automorphisms of the free product.

\section{Automorphisms}

For the duration of this section, let $\mathcal{G}$ denote a free factor system for a group $G$, and let $\mathcal{O} = \mathcal{O}(G,\mathcal{G})$.

\subsection*{- Acting on the Deformation Space}

\begin{nota}
	The outer automorphism group of $G$ is defined as $\text{Out}(G) := \faktor{\text{Aut}(G)}{\text{Inn}(G)}$; elements of $\text{Out}(G)$ are equivalence classes of automorphisms, where two automorphisms are equivalent if they differ by an inner automorphism.
	
	When we write $\alpha\in\text{Out}(G)$, we mean that $\alpha$ is an automorphism in $\text{Aut}(G)$ representing an equivalence class in $\text{Out}(G)$.
	
	In this paper, the automorphisms of $G$ will act on $G$ on the right.
\end{nota}

\begin{defn}\label{def:invariantautos}
	Let $\alpha\in\text{Aut}(G)$, and let $\mathcal{G}\a = \{(H)\a\mid H\in\mathcal{G}\}$. We say that $\mathcal{G}$ is \emph{$\alpha$-invariant} if $ \mathcal{G}\a = \mathcal{G}$.
\end{defn}

Free factor systems are closed under conjugation by elements of $G$, hence $\a$-invariance depends only on the outer automorphism class of $\a$. Thus we can make a similar definition for $\text{Out}(G)$:

\begin{defn}\label{def:invariantouterautos}
	Let $\alpha\in\text{Out}(G)$, and let $\mathcal{G}\a = \{(H)\a\mid H\in\mathcal{G}\}$. We say that $\mathcal{G}$ is \emph{$\alpha$-invariant} if $\mathcal{G}\a = \mathcal{G}$.
	
	The set of $\mathcal{G}$-invariant outer automorphisms of $G$ forms a group, which we shall denote $\text{Out}(G,\mathcal{G})$.
\end{defn}

The group $\text{Out}(G,\mathcal{G})$ admits a left action on the deformation space $\mathcal{O} = \mathcal{O}(G,\mathcal{G})$: Let $(T,d_T,\cdot)\in\mathcal{O}$, let $\alpha\in\text{Out}(G,\mathcal{G})$. Then $\alpha (T,d_T,\cdot) := (T,d_T,\cdot_{\alpha})$, the $G$-tree with the same underlying simplicial tree and metric, but with `twisted' action given by $x\cdot_{\alpha} g = x\cdot (g)\a$ for all $x\in T$.

Observe the following:

\begin{lem}\label{lem:alpha_preserves_orbits}
	Let $T\in\mathcal{O}$ be a $G$-tree, and let $\alpha\in \text{Out}(G,\mathcal{G})$. Then the $G$-orbits of $T$ are the same as those of $\a T$. That is, for all $x\in T$, $x\cdot G = x \cdot_\alpha G$.
\end{lem}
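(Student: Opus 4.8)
The plan is to reduce everything to the single fact that an automorphism of $G$ is a bijection, so that passing from the action $\cdot$ to the twisted action $\cdot_\a$ only permutes which group elements are applied to a given point and therefore cannot change the resulting orbit set.

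First I would fix a representative automorphism for the outer class $\a$ (such a choice is implicit in the definition of $\cdot_\a$) and simply unwind the definitions. Writing out the orbit of a point $x\in T$ under the twisted action gives
\[
x\cdot_\a G = \{\, x\cdot_\a g \mid g\in G \,\} = \{\, x\cdot (g)\a \mid g\in G \,\}.
\]
Next I would invoke that $\a$, being an automorphism, is surjective, so that $(G)\a = G$. Consequently, as $g$ ranges over $G$ the element $(g)\a$ ranges over all of $G$, and the set above is exactly $\{\, x\cdot h \mid h\in G \,\} = x\cdot G$. This yields $x\cdot_\a G = x\cdot G$ for every $x\in T$, which is precisely the claim.

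There is no genuine obstacle in this argument; the only subtlety worth recording is that $\a$ is a priori only an outer automorphism class, so one might worry about dependence on the chosen representative. However, the computation only ever uses the equality $(G)\a = G$, which holds for every representative, so the conclusion is independent of the choice. It is worth noting that this is exactly why the lemma is stated for $\a\in\text{Out}(G,\mathcal{G})$ rather than for a fixed element of $\text{Aut}(G)$: orbits are preserved no matter which inner-automorphism representative is used to realise the twist.
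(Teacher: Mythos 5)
Your proof is correct and is essentially the paper's own argument: the paper's one-line proof $x\cdot G = \{x\cdot g \mid g\in G\} = \{x\cdot (g)\a \mid g\in G\} = x\cdot_\alpha G$ rests on exactly the observation you make explicit, namely that $\a$ is a bijection of $G$ so $(g)\a$ ranges over all of $G$. Your additional remark about independence of the choice of representative is a sensible clarification but does not change the substance.
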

\begin{proof}
	$x\cdot G = \lbrace x\cdot g \mid g\in G \rbrace = \lbrace x\cdot (g)\a \mid g\in G \rbrace = x\cdot_\alpha G$
\end{proof}

Thus we see that the `twists' $\alpha$ applies to the action only occur within each orbit. This means that if we are working with both $(T,\cdot)$ and $(T,\cdot_\alpha)$, we are able to simply refer to `a $G$-orbit of $T$' without having to state which action is being used.

\begin{defn}\label{defn:irreducible}
	We can partially order the set of all free factor systems of $G$ by inclusion. Let $\mathcal{G}$ be a proper, $\a$-invariant free factor system. We say $\a\in \text{Out}(G,\mathcal{G})$ is \emph{$\mathcal{G}$-irreducible}, or \emph{irreducible with respect to $\mathcal{G}$}, if $\mathcal{G}$ is a maximal, proper $\a$-invariant free-factor system.
	
	Otherwise, we say $\a$ is \emph{reducible} with respect to $\mathcal{G}$.
\end{defn}

\subsection*{- Topological Representatives}

\begin{defn}\cite[p.16]{francavigliamartino2018a}
	Let $T,S\in \mathcal{O}(G,\mathcal{G})$. An \emph{$\mathcal{O}$-map} $f:T\to S$ is a $G$-equivariant, Lipschitz continuous function. The Lipschitz constant of $f$ is denoted $\text{Lip}(f)$.
\end{defn}

Note that an $\mathcal{O}$-map does not have to send vertices to vertices, and hence does not need to be a graph morphism.

\begin{defn}\cite[p.16]{francavigliamartino2018a}
	We say an $\mathcal{O}$-map $f:T\to S$ is \emph{straight} if it has constant speed on edges - that is, for each edge $e$ in $T$, there exists a non-negative number $\l_e(f)$ such that for any $a,b\in e$ we have $d_T(f(a), f(b)) = \l_e(f)d_S(a, b)$.
\end{defn}

\begin{defn}\label{def:topological_representative}
	Let $\alpha\in\text{Out}(G,\mathcal{G})$, and let $T\in\mathcal{O}$ be a $G$-tree. Then a map $f:T\to \alpha T$ is said to \emph{topologically represent} $\alpha$ if it is a straight $\mathcal{O}$-map.
\end{defn}

The authors of \cite[p. 16]{francavigliamartino2018a} make the following remark:

\begin{rem}\label{rem:Omaps_exist}
	Any two trees $T,S\in\mathcal{O}$ have an $\mathcal{O}$-map between them. Furthermore, any $\mathcal{O}$-map $f:T\to S$ can be uniquely 'straightened' - that is to say, there exists a unique straight $\mathcal{O}$-map $\text{Str}(f):T\to S$, such that $\text{Str}\big(f\big)(v) = f(v)$ for every vertex $v\in T$. We have \\$\text{Lip}(\text{Str}(f))\leq \text{Lip}(f)$.
\end{rem}

From this remark it follows that $\forall \a\in\text{Out}(G,\mathcal{G})$, $\forall T\in\mathcal{O}$, there exists a topological representative $f:T\to \a T$.


\begin{defn}\label{defn:hyperbolic_components}
	Let $F$ be a subforest of some $T\in\mathcal{O}$, and let $A$ be a component of $F$. We define the \emph{stabiliser} of $A$ to be the set $\text{stab}(A) = \{g\in G \mid A\cdot g = A\}$ - that is, we are taking the setwise stabiliser, not the pointwise stabiliser.
	
	We say that $F$ is $\mathcal{G}$-elliptic if, for every component $A$ of $F$, $\text{stab}(A)\in \mathcal{G}$. Otherwise we say that $F$ is $\mathcal{G}$-hyperbolic.
\end{defn}

Using topological representatives, we can construct a test for the reducibility of an automorphism:

\begin{thm}\label{thm:reducibility_test}
	Let $\mathcal{G}$ be a proper free factor system for a group $G$, let  $\alpha\in\text{Out}(G,\mathcal{G})$, and let $T\in\mathcal{O}$.
	
	Suppose that $\alpha$ can be topologically  represented by a $G$-equivariant simplicial map $f:T\to \alpha T$, and there exists a proper $f$-invariant, $G$-invariant, $\mathcal{G}$-hyperbolic subforest of $T$. Then $\alpha$ is reducible with respect to $\mathcal{G}$.
\end{thm}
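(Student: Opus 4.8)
\emph{The plan.} To show $\a$ is reducible I must show that $\mathcal{G}$ is \emph{not} a maximal proper $\a$-invariant free factor system (Definition \ref{defn:irreducible}); that is, I would exhibit a proper, $\a$-invariant free factor system $\mathcal{G}'$ with $\mathcal{G}\subsetneq\mathcal{G}'$. The natural candidate comes from the given subforest. Let $F$ be the hypothesised proper, $G$-invariant, $f$-invariant, $\mathcal{G}$-hyperbolic subforest, let $\hat{T}=T/F$ be the tree obtained by collapsing each component of $F$ to a point, and let $p:T\to\hat{T}$ be the collapse map. I propose to take $\mathcal{G}'=\mathcal{G}_{\hat{T}}$.

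\emph{$\hat{T}$ is a legitimate tree, and $\mathcal{G}'$ is proper and strictly larger.} First I would verify that $\hat{T}$ is again minimal, cocompact and edge-free, so that $\mathcal{G}_{\hat{T}}$ is a free factor system. Cocompactness is immediate, since $\hat{T}/G$ is a quotient of the finite graph $T/G$ (Proposition \ref{prop:finite_iff_compact}); edge-freeness holds because the edges of $\hat{T}$ are exactly the edges of $T$ outside $F$, whose stabilisers are trivial. Minimality I would argue by contradiction: if $S'\subsetneq\hat{T}$ were a proper $G$-invariant subtree, then $p^{-1}(S')$ would be a proper $G$-invariant subtree of $T$ (connected, since the fibres of $p$ are the connected components of $F$), contradicting minimality of $T$. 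Because $p$ is $G$-equivariant it sends elliptics to elliptics, so $\mathcal{G}\subseteq\mathcal{G}'$; the inclusion is strict because $\mathcal{G}$-hyperbolicity (Definition \ref{defn:hyperbolic_components}) gives a component $A$ of $F$ with $\text{stab}(A)\notin\mathcal{G}$, while $\text{stab}(A)$ fixes the point $p(A)$ and hence lies in $\mathcal{G}'$. Since $F$ is a proper subforest, $\hat{T}$ still contains an edge, so $G$ is not elliptic and $\mathcal{G}'$ is proper.

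\emph{$\a$-invariance.} This is where $f$-invariance is used. As $f$ is continuous with $f(F)\subseteq F$, it carries each component of $F$ into a single component, so $f$ descends through the collapse to a well-defined map $\hat{f}:\hat{T}\to\a\hat{T}$ satisfying $\hat{f}\circ p=p\circ f$; here I use that collapsing $F$ inside $\a T=(T,\cdot_\a)$ produces exactly $\a\hat{T}$, the subforest $F$ being the same subset and $G$-invariant for both actions by Lemma \ref{lem:alpha_preserves_orbits}. A direct check shows $\hat{f}$ is $G$-equivariant, and since a $G$-equivariant map carries elliptic subgroups to elliptic subgroups, $\mathcal{G}_{\hat{T}}\subseteq\mathcal{G}_{\a\hat{T}}$. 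Using that a subgroup $H$ is elliptic in $\a S$ iff $(H)\a$ is elliptic in $S$, one has $\mathcal{G}_{\a S}=\mathcal{G}_S\a^{-1}$ for any tree $S$, so this inclusion reads $\mathcal{G}'\subseteq\mathcal{G}'\a^{-1}$, i.e. $\mathcal{G}'\a\subseteq\mathcal{G}'$.

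\emph{Upgrading to equality, and conclusion.} It remains to promote $\mathcal{G}'\a\subseteq\mathcal{G}'$ to equality. Since $\a$ is an automorphism, $\mathcal{G}'\a$ is itself a free factor system of the same complexity as $\mathcal{G}'$ (the induced free product decomposition has the same free rank and the same number of factors). Invoking the standard fact that free factor systems are graded by a complexity that is strictly monotone under proper inclusion --- equivalently, that they satisfy the descending chain condition --- the chain $\mathcal{G}'\supseteq\mathcal{G}'\a\supseteq\mathcal{G}'\a^{2}\supseteq\cdots$ must stabilise, and applying a suitable power of $\a^{-1}$ yields $\mathcal{G}'\a=\mathcal{G}'$. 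Thus $\mathcal{G}'$ is a proper, $\a$-invariant free factor system strictly containing $\mathcal{G}$, so $\mathcal{G}$ is not maximal and $\a$ is reducible. The main obstacle I anticipate is precisely this last step: the map $\hat{f}$ only delivers one inclusion, and converting it into genuine $\a$-invariance relies on the external complexity/DCC theory of free factor systems rather than on anything internal to the collapse; a secondary technical point is the careful check that $\hat{T}$ remains minimal.
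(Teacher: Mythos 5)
Your proposal is correct and follows essentially the same route as the paper: collapse the $\mathcal{G}$-hyperbolic subforest, take the elliptic subgroups of the resulting minimal, cocompact, edge-free $G$-tree as the strictly larger proper free factor system, and use $f$-invariance to descend $f$ and obtain $\a$-invariance. The only substantive difference is that you justify the upgrade from $\mathcal{G}'\a\subseteq\mathcal{G}'$ to equality via a complexity/descending-chain argument, a step the paper's proof asserts without detail, and you omit the paper's cosmetic step of first enlarging $F$ to contain every vertex of $T$.
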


\begin{proof}
	
	We shall prove that $\alpha$ is reducible by constructing a new cocompact, minimal, edge-free $G$-tree $S$, from which we shall retrieve another proper $\alpha$-invariant free factor system $\mathcal{H}$ such that $\mathcal{G}\subset \mathcal{H}$.
	
	Let $F$ denote the subforest of $T$ described above, together with all the remaining vertices of $T$. This extended subforest is still proper, $f$-invariant, and $G$-invariant. (In addition, recall that we defined subforests such that they respect the simplicial structures of our $G$-trees; therefore the complement of $F$ is a set of edges.)
	
	We obtain $S$ from $T$ by collapsing each component $A$ of $F$ to a point $p_A$.  Edges which were not collapsed inherit their lengths from $T$, giving us a metric on $S$. Since $F$ is $G$-invariant, this collapse induces a minimal, isometric action of $G$ on $S$. Thus $S$ is a $G$-tree.
	
	Furthermore, if we declare the vertex set to be the set of $p_A$, we induce a new simplicial structure on $S$. (This is a well-defined vertex set, and the simplicial structure given by this vertex set is exactly the same as the usual simplicial structure we give to all $G$-trees, as defined in Section \ref{sec:1.2}).
	
	$S$ inherits cocompactness and edge-freeness from $T$. Hence, by definition, the set $\mathcal{H}$ of elliptic subgroups for $S$ is a free factor system for $G$.
	
	Let $v\in T$ be a vertex. Since $F$ contains every vertex of $T$, $v$ must lie in some component $A$ of $F$. Therefore, since $F$ is $G$-invariant, all of $A$ must be fixed (setwise, not necessarily pointwise) by $\text{stab}(v)$. Hence $\text{stab}(v) \leq \text{stab}(p_{A})$. This holds for all $v$, which is enough to tell us that $\mathcal{G}\subseteq \mathcal{H}$.
	
	Some component of $F$ has $\mathcal{G}$-hyperbolic stabiliser. This means that $\text{stab}(p_{A})\notin \mathcal{G}$ but $\text{stab}(p_{A})\in \mathcal{H}$. Thus $\mathcal{G}\subset \mathcal{H}$.
	
	Suppose that $G\in\mathcal{H}$ - that is to say, a vertex of $S$ is stabilised by $G$. Then the component of $F$ corresponding to this vertex is a $G$-invariant subtree of $T$, contradicting the minimality of $T$. Hence $G\notin \mathcal{H}$, so $\mathcal{H}$ is a proper free factor system.
	
	Finally, we must show that $\mathcal{H}$ is $\alpha$-invariant:
	
	Let $p_A$ be a vertex of $S$. We first want to show that $(\text{stab}(p_A))\a$ lies in $\mathcal{H}$ - that is to say, it fixes a point in $(S,*)$. $F$ is $f$-invariant, therefore $f(A)$ is a component of $F$ and $p_{f(A)}$ is a vertex of $S$. Furthermore, $\forall (g)\a\in (\text{stab}(v_A))\a$, $p_{f(A)}*(g)\a = p_{f(A)\cdot (g)\a} = p_{f(A\cdot g)} = p_{f(A)}$, and hence $(\text{stab}(p_A))\a\in\mathcal{H}$. This holds for all $p_A\in S$. This tells us that $\mathcal{H}\a\subseteq \mathcal{H}$, which in turn is enough to show that $\mathcal{H} = \mathcal{H}\a$.
	
	To summarize, $\mathcal{H}$ is a proper, $\alpha$-invariant  free-factor system for $G$, and $\mathcal{G}\subset \mathcal{H}$. Hence, by Definition \ref{defn:irreducible}, $\alpha$ is reducible with respect to $\mathcal{G}$.
\end{proof}

\subsection*{- Isometric topological representatives}

Our main results will make use of \emph{isometric} topological representatives, which allow us to make some additional observations:

\begin{rem}
	Recall that $\mathcal{O}$ is a space of equivalence classes of $G$-trees, where two trees are equivalent if there exists an equivariant isometry between them. Topological representatives are equivariant; therefore, if an isometric topological representative $f:T\to \a T$ exists, the two $G$-trees $T$ and $\a T$ are representing the same point in $\mathcal{O}$.
\end{rem}

\begin{prop}\label{prop:isometries_are_graph_autos}
	Let $f:T\to \a T$ be a topological representative for some $T\in \mathcal{O}(G,\mathcal{G})$, for some $\a\in \text{Out}(G,\mathcal{G})$. If $f$ is an isometry, then it is also a graph automorphism.
\end{prop}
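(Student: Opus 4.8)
The plan is to exploit the fact that $T$ and $\a T$ have the \emph{same} underlying metric simplicial tree: twisting the action by $\a$ changes neither the metric space nor its simplicial structure, since $\a$ permutes $G$ and hence leaves the set of edges inverted by the action (and so the added midpoints) unchanged. Thus $T$ and $\a T$ share a single vertex set $V$ and edge set $E$, and proving that the isometry $f$ is a graph automorphism amounts to showing $f(V)=V$ and $f(E)=E$ with incidence preserved. I would first note that $f$ is bijective: as an equivariant isometry, $f(T)$ is a nonempty $G$-invariant subtree of the minimal tree $\a T$, so $f(T)=\a T$, and injectivity is automatic. Consequently $f^{-1}$ is again an equivariant isometry, so every argument below applies verbatim to $f^{-1}$.

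The heart of the proof is $f(V)=V$. Every element of $V$ is either a non-regular point of $T$ --- a branch point, by Proposition \ref{prop:no_degree_1} --- or a midpoint of an edge that was inverted by the action. Branch points are easy: $f$ is a self-homeomorphism of the tree, so it restricts to a homeomorphism $T - v \to \a T - f(v)$ and cannot alter the number of complementary components; hence $f$ permutes the branch points, all of which lie in $V$. The midpoints are the subtle case, and they are the main obstacle: such a midpoint $m$ is a regular point of the metric space (degree $2$), so no metric argument detects it. Here I would instead use equivariance together with the edge-free hypothesis. The element inverting the edge fixes $m$, so $m$ is non-free (compare Proposition \ref{prop:no_free_degree_2}); equivariance gives $\text{stab}_{\cdot}(m)\subseteq \text{stab}_{\cdot_{\a}}(f(m))$, so $f(m)$ is fixed by a nontrivial element of $G$, and Lemma \ref{lem:elliptics_are_vertex_stabs} then forces $f(m)$ to be a vertex of $\a T$. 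This gives $f(V)\subseteq V$, and running the identical argument for $f^{-1}$ yields the reverse inclusion, so $f(V)=V$.

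Once $f$ is a bijection carrying $V$ onto $V$, the edges follow formally. An edge is a geodesic arc between two adjacent vertices whose interior contains no vertex; $f$ sends it to a geodesic arc of equal length between two vertices, and since $f^{-1}$ also maps $V$ into $V$, the image arc has no vertex in its interior either and is therefore an edge. Hence $f$ maps $E$ bijectively to $E$ and respects incidence, which is exactly the statement that $f$ is an automorphism of the common underlying graph. The only genuinely nontrivial input is the treatment of the inverted-edge midpoints in the previous step; everything else is routine once one observes that $T$ and $\a T$ carry the same simplicial structure and that $f$ is a bijective isometry.
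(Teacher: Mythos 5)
Your proof is correct and follows essentially the same route as the paper's: branch points are preserved because a bijective isometry preserves the number of complementary components, and the degree-two midpoint vertices are handled via equivariance and their nontrivial stabilisers. You are in fact slightly more careful than the paper at the midpoint step, invoking Lemma \ref{lem:elliptics_are_vertex_stabs} to conclude that the image of a non-free point must be a vertex, and justifying bijectivity via minimality rather than asserting it.
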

\begin{proof}
	It is sufficient to show that $f(v)$ is a vertex if and only if $v$ is a vertex.
	$f$ is an isometry 2- in particular it is bijective - therefore $f(v)$ is a branch point if and only if $v$ is a branch point. The only vertices which remain are the degree 2 vertices. Recall that these were introduced as the midpoints of inverted edges, hence they all have stabiliser of order 2. $f$ is equivariant, therefore $\text{stab}(v)$ is order 2 if and only if $\text{stab}(f(v))$ is order 2. Thus the set of degree 2 vertices is also preserved, and $f$ is a graph automorphism.
\end{proof}

\begin{defn}
	Let $Y$ be a metric graph. The \emph{volume} of $Y$, denoted $\text{Vol}(Y)$ is defined to be the sum of the lengths of the edges of $Y$.
	
	Let $T\in\mathcal{O}$. The \emph{covolume} of $T$, denoted $\text{Covol}(T)$, is defined to be the volume of the graph $T/G$.
\end{defn}

\begin{prop}\label{prop:isom_iff_Lip1}
	Let $f:T\to \a T$ be a topological representative for some $T\in \mathcal{O}(G,\mathcal{G})$, for some $\a\in \text{Out}(G,\mathcal{G})$. Then $\text{Lip}(f)=1$ if and only if $f$ is an isometry.
\end{prop}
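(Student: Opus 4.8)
The plan is to prove the two implications separately, the forward direction being immediate and the reverse requiring a covolume comparison.

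If $f$ is an isometry then $d_{\a T}(f(a),f(b))=d_T(a,b)$ for all $a,b\in T$, so every difference quotient equals $1$ and $\text{Lip}(f)=1$; this settles one direction at once.

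For the converse I would exploit the fact that $T$ and $\a T$ have the \emph{same} underlying metric tree and, by Lemma \ref{lem:alpha_preserves_orbits}, the same $G$-orbits. Consequently the quotient metric graphs coincide, $Q:=T/G=(\a T)/G$, and in particular $\text{Covol}(T)=\text{Covol}(\a T)=\text{Vol}(Q)$. Since $f$ is $G$-equivariant and continuous and $T$ is connected, $f(T)$ is a nonempty $G$-invariant subtree of $\a T$, so minimality forces $f(T)=\a T$; thus $f$ is surjective and descends to a surjective straight self-map $\bar f:Q\to Q$ whose speed on each edge $\bar e$ equals the speed $\l_e(f)\le \text{Lip}(f)=1$ of a lift. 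The crux is then a length estimate: each $\bar f(\bar e)$ is a path of measure at most $\l_{\bar e}L(\bar e)$, and the images cover $Q$ by surjectivity, so
\[
\text{Vol}(Q)=\mu\Big(\bigcup_{\bar e}\bar f(\bar e)\Big)\le \sum_{\bar e}\mu\big(\bar f(\bar e)\big)\le \sum_{\bar e}\l_{\bar e}L(\bar e)\le \sum_{\bar e}L(\bar e)=\text{Vol}(Q).
\]
Every inequality is therefore an equality: from $\sum \l_{\bar e}L(\bar e)=\sum L(\bar e)$ with each $\l_{\bar e}\le 1$ I conclude $\l_{\bar e}=1$ for every edge (so $f$ is isometric on the interior of each edge), and from the equality in subadditivity I conclude that the edge-images $\bar f(\bar e)$ are pairwise essentially disjoint.

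The remaining and most delicate step is to promote ``speed $1$ on every edge'' to a genuine local isometry by excluding folding at vertices, and then to a global isometry. Folding at a vertex $v$ means two distinct edges $e_1,e_2$ incident to $v$ whose images leave $f(v)$ in the same direction. If $e_1,e_2$ lie in distinct $G$-orbits this makes $\bar f(\bar e_1)$ and $\bar f(\bar e_2)$ overlap in a segment of positive length, contradicting the essential disjointness just obtained. If instead $e_2=e_1\cdot h$ with $h\in\text{stab}(v)$, then folding says that $h$ fixes the germ of $f(e_1)$ at $f(v)$, so $h$ stabilises an edge of $\a T$; but $\a T$ is edge-free, whence $h=1$ and $e_1=e_2$, again a contradiction. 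With folding excluded, $f$ is a local isometry between $\R$-trees, and such a map is automatically injective: if $f(x)=f(y)$ then $f([x,y])$ is a locally geodesic loop, which in a tree must be constant, giving $d_T(x,y)=0$. Being an injective, surjective local isometry, $f$ is distance-preserving, i.e. an isometry. I expect the folding analysis---in particular reconciling the different-orbit case (handled by disjointness) with the same-orbit case (handled by edge-freeness)---to be the main obstacle; everything else is bookkeeping with the covolume identity.
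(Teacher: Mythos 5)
Your proof is correct and follows essentially the same route as the paper's: both establish surjectivity of $f$ from minimality, run the same covolume count over a fundamental domain (you phrase it in the quotient graph $T/G$, the paper uses a subforest $D$ with one edge per orbit), force equality throughout, and then exclude folds by combining the volume equality (for edges in distinct orbits) with edge-freeness of $\a T$ (for edges in the same orbit). The only cosmetic difference is that you spell out the final step --- a locally injective, edgewise-isometric map of trees is a global isometry --- slightly more explicitly than the paper does.
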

\begin{proof}
	If $f$ is an isometry, then $\text{Lip}(f)=1$ follows immediately. It remains to prove the converse.
	
	Let $D$ be a subforest of $T$ consisting of exactly one edge from each orbit. Then \\$\text{Covol}(T) = \text{Vol}(D)$. Without loss of generality, we may assume that $\text{Covol}(T)=1$. Since $T$ and $\a T$ have the same metric, this means that $\text{Covol}(\a T)=1$
	
	Since $f$ is equivariant, $f(D)$ contains a fundamental domain for $f(T)$. Since $\a T$ does not contain any proper invariant subtrees, we must have $f(T) = \a T$, hence $f(D)$ contains a fundamental domain for $\a T$. It follows that $\text{Vol}(f(D))\geq 1$. In addition, 
	\begin{align*}
		\text{Vol}(f(D)) &\leq \sum_{\text{edges } e\in D} L(f(e)) &&\text{(*)}\\
		&\leq  \sum_{\text{edges } e\in D} L(e) &&\text{(as $\text{Lip}(f)=1$)}\\
		&= \text{Vol}(D)\\
		&=\text{Covol}(T)\\
		&=1
	\end{align*}
	
	We split into two cases:
	
	\textbf{Case 1: $f$ is not locally injective} This is equivalent to saying that $f$ `folds' a pair of edges - that is, there is a vertex $v$, neighbourhoods $U_1, U_2$ of $v$, and edges $e_1, e_2$ incident to $v$ such that $f(e_1\cap U_1) = f(e_2\cap U_2)$. (The neighbourhoods are required because $f$ may not fold the entirety of the edges, only the initial segments. Since $f$ may stretch these segments, the neighbourhoods are not the same size in general).
	
	The two edges can only be folded if they lie in different orbits; observe that if $e_1\cdot g = e_2$, then $f(e_1\cap U_1)$ must be fixed by $\a(g)$, contradicting edge freeness. Therefore we are free to choose $D$ such that it contains a pair of folded edges.
	
	If $e_1,e_2$ are a pair of folded edges in $D$, then the volume of their image under $f$ will be strictly less then the sum of their original lengths. This means that (*) is a strict inequality, so $\text{Vol}(f(D)) < 1$. This contradicts $\text{Vol}(f(D))\geq 1$, hence Case 1 cannot occur.
	
	\textbf{Case 2: $f$ is locally injective.} Then (*) is an equality, so $\text{Vol}(f(D)) = 1$. This is enough to tell us that $L(f(e)) = L(e)$ for every edge $e$ of $D$, and hence all the edges of $T$; thus $f$ is an isometry on every edge of $T$. This, combined with local injectivity, means that $f$ is an isometry on all of $T$.
	
\end{proof}

\section{Distance on $\mathcal{O}$}

\subsection*{- Stretching Factors}

\begin{defn}
	Let $g\in G$, and let $T\in\mathcal{O}(G,\mathcal{G})$. The \emph{translation length} of $g$ in $T$, denoted $l_T(g)$, is defined as
	\begin{center}
		$l_T(g) = \displaystyle\inf_{x\in T} \{d_T(x, x\cdot g)\}$
	\end{center}
\end{defn}

\begin{rem}	
	This infimum is in fact a minimum, and is obtained for some $x$. If $g$ is elliptic then this is observed to be true from the definition of an elliptic element, and we have $l_T(g) = 0$.
	
	If $g$ is hyperbolic then the translation length will be non-zero, and the set of elements realising this length will form a line through $T$ called the \emph{hyperbolic axis of $g$}. Points on the axis will be translated along the axis by $l_T(g)$.
\end{rem}

\begin{rem}
	An equivalence class of $G$-trees in $\mathcal{O}$ is uniquely determined by its translation length function \cite{cullermorgan} - thus one can think of $\mathcal{O}$ as being embedded in the space $\R^G$.
\end{rem}

\begin{defn}
	Let $\text{Hyp}(\mathcal{G})$ denote the set of elements of $G$ which do not lie in any subgroup of $\mathcal{G}$. (In other words, $\text{Hyp}(\mathcal{G})$ is the set of elements which are hyperbolic with respect to some, and hence all, $G$-trees in $\mathcal{O}(G,\mathcal{G})$).
\end{defn}

\begin{defn}\cite[p.8]{francavigliamartino2015}\label{def:stretching_factors}
	Let $T,S\in \mathcal{O}(G,\mathcal{G})$. Then we define the \emph{left and right stretching factor} from $T$ to $S$ as
	\begin{align*}
		\Lambda_L(T,S) := \sup_{g\in \text{Hyp}(\mathcal{G})}\dfrac{l_T(g)}{l_S(g)} \hspace{35pt} \Lambda_R(T,S) := \sup_{g\in \text{Hyp}(\mathcal{G})}\dfrac{l_S(g)}{l_T(g)} = \Lambda_L(S,T)
	\end{align*}
	respectively. We also define the \emph{symmetric stretching factor} from $T$ to $S$ to be
	\begin{align*}
		\Lambda(T,S) := \Lambda_L(T,S)\Lambda_R(T,S)
	\end{align*}
\end{defn}

The next Theorem follows from \cite[Corollary 6.8, p.18 and Theorem 6.11, p.19]{francavigliamartino2015}.

\begin{thm}\label{thm:lip_realises_sf}
	Let $T,S\in\mathcal{O}$. Then there exists a Lipschitz continuous map $f:T\to S$ such that $\text{Lip}(f) = \Lambda_R(T,S)$
\end{thm}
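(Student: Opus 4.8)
The plan is to prove the equality in two halves: a universal lower bound valid for \emph{every} equivariant Lipschitz map, followed by the construction of a single map that attains it.

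First I would establish that every $G$-equivariant Lipschitz map $f:T\to S$ satisfies $\text{Lip}(f)\geq \Lambda_R(T,S)$. Fix a hyperbolic $g\in\text{Hyp}(\mathcal{G})$ and a point $x$ on its axis in $T$, so that $d_T(x,x\cdot g)=l_T(g)$. Equivariance gives $f(x\cdot g)=f(x)\cdot g$, and the Lipschitz bound gives $d_S(f(x),f(x)\cdot g)\leq \text{Lip}(f)\,l_T(g)$. Since $l_S(g)$ is the infimal displacement in $S$, this yields $l_S(g)\leq \text{Lip}(f)\,l_T(g)$, hence $l_S(g)/l_T(g)\leq \text{Lip}(f)$ for every hyperbolic $g$; taking the supremum gives $\Lambda_R(T,S)\leq \text{Lip}(f)$. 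Consequently any map achieving $\text{Lip}(f)=\Lambda_R(T,S)$ is automatically of minimal Lipschitz constant, so it suffices to produce an optimal map and to bound its constant above by $\Lambda_R(T,S)$.

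Second, I would produce an optimal map. By Remark \ref{rem:Omaps_exist} an $\mathcal{O}$-map $T\to S$ exists and straightens without increasing its Lipschitz constant, so the infimum $\text{Lip}(T,S):=\inf\{\text{Lip}(f)\}$ over $\mathcal{O}$-maps equals the infimum over straight maps. A straight $G$-equivariant map is determined by the images of the finitely many vertex-orbit representatives of $T$; by equivariance a vertex $v$ with nontrivial stabiliser $H\in\mathcal{G}$ must map to a point fixed by $H$, which by Lemma \ref{lem:elliptics_are_vertex_stabs} is the \emph{unique} $H$-fixed vertex of $S$, so the images of non-free vertices are forced and only the finitely many free-vertex orbits furnish genuine parameters. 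For a straight map the image of each edge is a scaled geodesic, so $\text{Lip}(f)=\max_{e}\, d_S(f(\iota e),f(\tau e))/L(e)$, the maximum being over the edges of a fundamental domain and depending continuously on the free-vertex images. Since moving a free vertex far from the forced images only increases the stretch, a minimising configuration stays in a bounded region, and a compactness/properness argument on this finite-dimensional parameter space realises the infimum by some straight map $f_0$ with $\text{Lip}(f_0)=\text{Lip}(T,S)$.

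Third, and this is the crux, I would show $\text{Lip}(f_0)\leq \Lambda_R(T,S)$. Write $L=\text{Lip}(f_0)$ and call an edge \emph{maximally stretched} if it is expanded by exactly $L$; these edges form the \emph{tension subgraph} $\Delta\subseteq T$. Optimality of $f_0$ should force $\Delta$ to support a hyperbolic element: otherwise one could perturb the free-vertex images along directions that relax every maximally stretched edge, lowering $L$ and contradicting minimality. More precisely, the image of $\Delta$ in $T/G$ must contain one of finitely many \emph{candidate} configurations---an embedded loop, a figure-eight, or a barbell whose arcs may terminate at non-free vertices carrying the relevant vertex-group elements---and such a configuration determines a hyperbolic conjugacy class $g$ whose axis lies entirely in $\Delta$. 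For such $g$ the whole axis is stretched by $L$, giving $l_S(g)=L\,l_T(g)$ and hence $\Lambda_R(T,S)\geq l_S(g)/l_T(g)=L$. Combined with the lower bound, $\text{Lip}(f_0)=\Lambda_R(T,S)$, and $f_0$ is the required map.

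The main obstacle is this third step: proving that an optimal map's tension subgraph must carry a maximally stretched hyperbolic axis. Its heart is the \emph{finiteness of candidates}---that the supremum defining $\Lambda_R(T,S)$ is attained on a finite, explicitly describable family of conjugacy classes---and in the free-product setting one must additionally account for axes passing through non-free vertices, so the candidate list involves the vertex groups $G_1,\dots,G_k$ rather than only the loops of the classical Culler--Vogtmann case. This is precisely the content imported from \cite{francavigliamartino2015}.
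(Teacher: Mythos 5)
Your outline is essentially correct, but it is worth noting that the paper does not prove this statement at all: it is imported wholesale, with the sentence ``The next Theorem follows from [Francaviglia--Martino 2015, Cor.\ 6.8 and Thm 6.11]'' serving as the entire justification. What you have written is a faithful roadmap of the proof that lives in that reference: the translation-length lower bound $\mathrm{Lip}(f)\geq\Lambda_R(T,S)$ for every equivariant Lipschitz map is exactly right and complete as you state it; the existence of a minimiser (whether via your finite-dimensional parameter space of straight maps or via Arzel\`a--Ascoli as in the source) is standard; and you correctly identify the crux as the tension-graph/candidates argument, including the free-product subtlety that candidates may pass through non-free vertices and carry vertex-group elements. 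Two places where your sketch is thinner than a proof: the claim that non-free vertex images are ``forced'' needs the observation that the fixed-point set of a nontrivial subgroup $H$ is a single vertex (which does follow from Lemma \ref{lem:elliptics_are_vertex_stabs} applied to any nontrivial element of $H$, but deserves a line); and the perturbation step ``relax every maximally stretched edge simultaneously'' is not always possible for an arbitrary optimal map --- the correct statement is that one can choose an optimal map whose tension graph admits a legal structure at every vertex, and it is this legality that produces the candidate loop. Since you explicitly defer that step to \cite{francavigliamartino2015}, your proposal ends up in the same logical position as the paper, just with considerably more of the machinery made visible.
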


\subsection*{- The Displacement of an Automorphism}

\begin{defn}
	Let $\a\in \text{Out}(G,\mathcal{G})$. Then we define the \emph{displacement} of $\a$ to be
	\begin{align*}
		\l_\a :=\inf_{T\in\mathcal{O}}\Lambda_R(T,\a T)
	\end{align*}
\end{defn}

\begin{thm}\cite[p. 25]{francavigliamartino2015}\label{thm:irreduciblerealisesdisplacement}
	For any $\mathcal{G}$-irreducible $\a\in \text{Out}(G,\mathcal{G})$, the displacement of $\a$ is a minimum and obtained for some $T\in\mathcal{O}$.
\end{thm}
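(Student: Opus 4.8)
The plan is to run a direct minimization argument: take a minimizing sequence in the covolume-$1$ slice $\mathcal{O}_1$, extract a convergent subsequence, and identify its limit as a genuine minimizer. The one real obstacle is that $\mathcal{O}_1$ is not compact, so I must rule out the possibility that every minimizing sequence escapes to the boundary, and it is precisely here that irreducibility enters. First I would record that the displacement is scale invariant --- since $l_{cT}(g)=c\,l_T(g)$ and $\a(cT)=c\,\a T$, the ratio defining $\Lambda_R(T,\a T)$ is unchanged under rescaling --- so the infimum over $\mathcal{O}$ equals the infimum over $\mathcal{O}_1$, and I may work entirely in $\mathcal{O}_1$.

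Set $D(T):=\Lambda_R(T,\a T)=\sup_{g\in\text{Hyp}(\mathcal{G})} l_T\big((g)\a\big)/l_T(g)$. Because an equivalence class in $\mathcal{O}$ is determined by, and varies continuously with, its translation-length function, each ratio $l_T((g)\a)/l_T(g)$ is continuous in $T$; hence $D$, being a supremum of continuous functions, is lower semicontinuous. Consequently, if a minimizing sequence $T_n$ (with $D(T_n)\to\l_\a$) has a subsequence converging to some $T_\infty\in\mathcal{O}_1$, then $D(T_\infty)\leq\liminf_n D(T_n)=\l_\a$, and since $\l_\a$ is an infimum we get $D(T_\infty)=\l_\a$, as required. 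Recalling that $\mathcal{O}_1$ is a union of open simplices coordinatised by edge lengths, such a convergent subsequence certainly exists whenever the $T_n$ stay inside finitely many simplices with all edge lengths bounded below by some $\e>0$: that locus is compact, so I may apply a standard Arzel\`a--Ascoli / closed-simplex compactness argument.

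It therefore remains to exclude genuine escape, in which some edge lengths tend to $0$ and the trees degenerate. In the limit, the vanishing edges assemble into a $G$-invariant subforest $F$ of (a subsequence of) the $T_n$, and collapsing $F$ produces a tree in which certain previously hyperbolic subgroups have become elliptic; equivalently, the limit carries a free factor system $\mathcal{H}$ strictly larger than $\mathcal{G}$. The maps realising the stretching factors (Theorem \ref{thm:lip_realises_sf}), straightened and made simplicial via Remark \ref{rem:Omaps_exist}, have Lipschitz constants bounded by $D(T_n)\to\l_\a$; passing to the limit they should yield a simplicial topological representative of $\a$ for which $F$ is invariant. If $F$ is $\mathcal{G}$-hyperbolic, Theorem \ref{thm:reducibility_test} then hands back an $\a$-invariant free factor system properly containing $\mathcal{G}$, contradicting the maximality of $\mathcal{G}$ in Definition \ref{defn:irreducible}; while if $F$ is $\mathcal{G}$-elliptic the collapse leaves the free factor system unchanged, the limit stays in $\mathcal{O}$, and after renormalising covolume we are back in the compact case. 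Either way escape is impossible, so the minimizer exists.

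The hard part will be this last paragraph: making rigorous the sense in which the degenerate $T_n$ converge and, above all, organising the individual stretching-factor maps $f_n:T_n\to\a T_n$ into a single equivariant simplicial map on the limiting tree for which the collapsed subforest $F$ is genuinely invariant --- the maps $f_n$ need not converge on the nose, and $F$ is not a priori $f_n$-invariant, so one must exploit the uniform Lipschitz bound and the uniqueness of straightening to produce an honest invariant subforest before Theorem \ref{thm:reducibility_test} can be applied.
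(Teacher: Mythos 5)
The paper does not actually prove this statement: it is imported wholesale from Francaviglia--Martino \cite[p.~25]{francavigliamartino2015}, where it is obtained as part of the train-track machinery (the existence of train track representatives for irreducible automorphisms, whose supporting trees realise the displacement). Your proposal instead attempts the ``Bers-like'' direct minimisation, which is a legitimate alternative route in the literature, so the question is only whether your sketch closes. It does not, and the gap is not just the one you flag at the end. Your case analysis of a minimising sequence $(T_n)\subseteq\mathcal{O}_1$ is a false dichotomy: you allow (a) the $T_n$ stay in finitely many simplices with edge lengths bounded below (compactness applies), or (b) some edge lengths tend to $0$. There is a third possibility: the $T_n$ remain uniformly thick but wander through \emph{infinitely many distinct simplices} --- e.g.\ a sequence of the form $\phi^n T_0$ stays in the thick part yet leaves every compact subset of $\mathcal{O}_1$, converging only to a point of the boundary of the closure of outer space. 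A lower bound on edge lengths bounds the dimension and the combinatorial type of the quotient graphs, but not the markings, and the thick part of $\mathcal{O}_1$ is only cocompact under the group action, not compact. Since the displacement function is not invariant under that action (it transforms by conjugating $\a$), you cannot quotient this wandering away. Handling this case is where the real content of the theorem lives --- in the direct-minimisation approach one needs something like the fact that a local minimum of the displacement over the thick closure of a single simplex and its neighbours already supports a train track map and hence realises the global minimum; nothing in your sketch supplies this.

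Secondly, even the degeneration case (b) is not in a finishable state. You propose to pass to a limit of the optimal maps $f_n$ and extract an $f$-invariant, $\mathcal{G}$-hyperbolic subforest to feed into Theorem \ref{thm:reducibility_test}, and you candidly note that the $f_n$ need not converge and that the short forest $F$ is not a priori invariant. That is precisely the step that fails as stated: the standard repair is not a limiting argument at all, but a quantitative one carried out on a single $T_n$ --- one shows that if $T_n$ has a $\mathcal{G}$-hyperbolic subforest of very small volume relative to $\Lambda_R(T_n,\a T_n)$, then the union of the iterated images of that forest under the optimal map is still a proper, invariant, $\mathcal{G}$-hyperbolic subforest (its volume grows by at most the Lipschitz factor per iteration, so it cannot fill the tree before the orbit closes up), whence $\a$ is reducible. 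Also, in your $\mathcal{G}$-elliptic subcase, collapsing and renormalising does not return you to ``the compact case''; it returns you to the original problem with fewer edge orbits, so an induction on the number of edge orbits is needed there. As it stands the proposal is a reasonable road map for one known proof strategy, but both of its load-bearing steps are open.
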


\begin{defn}
	For any $\a\in\text{Out}(G,\mathcal{G})$, we define
	\begin{align*}
		\text{Min}(\a) = \{T\in\mathcal{O}\mid \Lambda_R(T,\a T) = \lambda_\a \}
	\end{align*}
	That is to say, $\text{Min}(\a)$ is the set of all $T$ which realise the above infimum.
\end{defn}

\begin{thm}\label{thm:isom_top_rep_exists}
	Let $\a\in \text{Out}(G,\mathcal{G})$ be a $\mathcal{G}$-irreducible, displacement 1 automorphism. Then for all $T\in \text{Min}(\a)$, there exists an isometric topological representative for $\a$ on $T$.
\end{thm}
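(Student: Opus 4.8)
The plan is to take the Lipschitz map provided by the realisation of the stretching factor, straighten it into a topological representative, and then show that this straightening is forced to have Lipschitz constant exactly $1$, whence Proposition \ref{prop:isom_iff_Lip1} makes it an isometry.

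First I would note that, since $\a$ has displacement $1$, we have $\lambda_\a = 1$, and since $T\in\text{Min}(\a)$ this gives $\Lambda_R(T,\a T) = \lambda_\a = 1$. By Theorem \ref{thm:lip_realises_sf} there is a Lipschitz map $f:T\to\a T$ with $\text{Lip}(f) = \Lambda_R(T,\a T) = 1$; the map realising the stretching factor in this setting is $G$-equivariant, so $f$ is an $\mathcal{O}$-map. Using Remark \ref{rem:Omaps_exist}, I would straighten $f$ to obtain a straight $\mathcal{O}$-map $\text{Str}(f):T\to\a T$ --- that is, a topological representative of $\a$ on $T$ --- satisfying $\text{Lip}(\text{Str}(f)) \leq \text{Lip}(f) = 1$.

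Next I would pin down the Lipschitz constant of $\text{Str}(f)$ from below. Any $\mathcal{O}$-map $h:T\to\a T$ is $G$-equivariant, so for every $g\in\text{Hyp}(\mathcal{G})$ one has $l_{\a T}(g)\leq \text{Lip}(h)\, l_T(g)$; taking the supremum over $g$ and using Definition \ref{def:stretching_factors} gives $\text{Lip}(h)\geq \Lambda_R(T,\a T) = 1$. Applying this to $h = \text{Str}(f)$ yields $\text{Lip}(\text{Str}(f))\geq 1$, and combined with the inequality from the previous paragraph this forces $\text{Lip}(\text{Str}(f)) = 1$. Since $\text{Str}(f)$ is a topological representative with Lipschitz constant $1$, Proposition \ref{prop:isom_iff_Lip1} tells us it is an isometry, completing the argument.

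The routine parts --- reducing to $\Lambda_R(T,\a T)=1$ and deriving the lower bound on Lipschitz constants from translation lengths --- are straightforward. The step that needs care, and which I expect to be the main obstacle, is ensuring that the map supplied by Theorem \ref{thm:lip_realises_sf} can genuinely be taken to be an $\mathcal{O}$-map (i.e.\ $G$-equivariant), so that both the straightening of Remark \ref{rem:Omaps_exist} and the lower bound on $\text{Lip}$ apply to it; granting equivariance, the whole argument reduces to the sandwich $1 = \Lambda_R(T,\a T)\leq \text{Lip}(\text{Str}(f))\leq \text{Lip}(f) = 1$ followed by Proposition \ref{prop:isom_iff_Lip1}.
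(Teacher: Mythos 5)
Your proof is correct and follows essentially the same route as the paper: obtain a map $f:T\to\a T$ with $\text{Lip}(f)=\Lambda_R(T,\a T)=1$ from Theorem \ref{thm:lip_realises_sf} and conclude via Proposition \ref{prop:isom_iff_Lip1}. Your additional straightening step, together with the lower bound $\text{Lip}(h)\geq\Lambda_R(T,\a T)$ for any $\mathcal{O}$-map $h$, is in fact slightly more careful than the paper's own argument, which applies Proposition \ref{prop:isom_iff_Lip1} (stated for straight $\mathcal{O}$-maps, i.e.\ topological representatives) directly to $f$ without verifying straightness or equivariance.
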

\begin{proof}	
	Let $T\in \text{Min}(\a)$. Then by definition of the minimally displaced set, $\Lambda_R(T,\a T) = \lambda_\a = 1$, and hence by Theorem \ref{thm:lip_realises_sf} there exists a Lipschitz continuous map $f:T\to \a T$ with $\text{Lip}(f) = 1$. Therefore, by Proposition \ref{prop:isom_iff_Lip1}, $f$ is an isometric topological representative for $\a$.
\end{proof}

\begin{cor}\label{cor:minsetisfixset}
	Let $\a\in \text{Out}(G,\mathcal{G})$ be a $\mathcal{G}$-irreducible, displacement 1 automorphism. Then $\text{Min}(\a) = \text{Fix}(\a)$.
\end{cor}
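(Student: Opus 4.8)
The plan is to prove the two inclusions $\text{Min}(\a)\subseteq\text{Fix}(\a)$ and $\text{Fix}(\a)\subseteq\text{Min}(\a)$ separately. Both should follow quickly, since the substantive analytic work has already been carried out in Theorem \ref{thm:isom_top_rep_exists} (and, underneath it, Proposition \ref{prop:isom_iff_Lip1}); what remains is essentially to unwind the definitions of $\text{Min}$, $\text{Fix}$, and equivalence in $\mathcal{O}$.

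For $\text{Min}(\a)\subseteq\text{Fix}(\a)$ I would start from an arbitrary $T\in\text{Min}(\a)$. Since $\a$ is $\mathcal{G}$-irreducible and displacement $1$, Theorem \ref{thm:isom_top_rep_exists} furnishes an isometric topological representative $f:T\to\a T$. As a topological representative $f$ is $G$-equivariant, and as an isometry it is therefore a $G$-equivariant isometry; by the definition of equivalence of $G$-trees this gives $T\sim\a T$. Hence $T$ and $\a T$ represent the same point of $\mathcal{O}$, i.e. $\a T=T$ in $\mathcal{O}$, so $T\in\text{Fix}(\a)$.

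For the reverse inclusion I would take $T\in\text{Fix}(\a)$, so that $\a T\sim T$ as $G$-trees. Equivalent trees carry identical translation length functions, so $l_{\a T}(g)=l_T(g)$ for every $g\in G$; feeding this into Definition \ref{def:stretching_factors} makes every ratio in the supremum defining $\Lambda_R(T,\a T)$ equal to $1$, whence $\Lambda_R(T,\a T)=1$. Because $\a$ has displacement $1$ we have $\lambda_\a=1$, so $\Lambda_R(T,\a T)=\lambda_\a$ and $T$ realises the infimum defining $\text{Min}(\a)$; thus $T\in\text{Min}(\a)$.

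I do not expect a genuine obstacle here: the corollary is designed to be an immediate consequence of Theorem \ref{thm:isom_top_rep_exists}. The only points needing care are bookkeeping ones, namely correctly passing from ``an equivariant isometry exists'' to ``the same point of $\mathcal{O}$'' via the definition of equivalence, and observing that equivalent trees share a translation length function so that $\Lambda_R$ collapses to $1$. One should also note in passing that $\text{Hyp}(\mathcal{G})$ is non-empty (which holds since $\mathcal{G}$ is proper, so $G$ itself is not elliptic), ensuring that $\Lambda_R(T,\a T)$ is a supremum of ratios all equal to $1$ rather than a vacuous supremum.
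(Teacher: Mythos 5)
Your proposal is correct and follows essentially the same route as the paper: the inclusion $\text{Min}(\a)\subseteq\text{Fix}(\a)$ via the equivariant isometry supplied by Theorem \ref{thm:isom_top_rep_exists}, and the reverse inclusion by noting that equivalent trees give $\Lambda_R(T,\a T)=1=\lambda_\a$. Your extra remarks (unwinding $\Lambda_R$ through translation lengths, and checking $\text{Hyp}(\mathcal{G})\neq\emptyset$) are just slightly more explicit bookkeeping than the paper's version.
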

\begin{proof}
	Let $T\in \text{Fix}(\a)$. Then $T$ and $\a T$ are equivalent $G$-trees, so $\Lambda(T,\a T) = 1 = \lambda_\a$. Thus $\text{Fix}(\a) \subseteq \text{Min}(\a)$.
	
	Conversely, let $T\in \text{Min}(\a)$. By Theorem \ref{thm:isom_top_rep_exists}, there exists an equivariant isometry from $T$ to $\a T$.  Points in $\mathcal{O}$ are equivalence classes of $G$-trees under equivariant isometry, hence $T$ and $\a T$ represent the same point in $\mathcal{O}$. Thus $\text{Min}(\a)\subseteq \text{Fix}(\a)$.
\end{proof}

\section{Secondary Theorem}

We extend some of our terminology for $G$-trees to graphs of groups:

\begin{defn}
	Let $T \in \mathcal{O}(G,\mathcal{G})$, and let $X$ be a quotient graph of groups on $T/G$. We shall say that a vertex of $T/G$ is \emph{free} if it has trivial vertex group in $X$. Otherwise, it is \emph{non-free}. Note that this definition does not depend on our choice of $X$.
\end{defn}

\begin{defn}
	Let $T\in \mathcal{O}(G,\mathcal{G})$, and let $X$ be a quotient graph of groups on $T/G$. We say that a subgraph-of-groups of $X$ is $\mathcal{G}$-elliptic if and only if the fundamental group of all its components lies in $\mathcal{G}$. Otherwise, we say it is $\mathcal{G}$-hyperbolic.
	
	Similarly, we say that a subgraph of $T/G$ is $\mathcal{G}$-elliptic/hyperbolic if the corresponding subgraph-of-groups of $X$ is $\mathcal{G}$-elliptic/hyperbolic. Observe that this definition does not depend on the choice of marking on $X$.
\end{defn}

It follows from the Fundamental Theorem of Bass-Serre Theory that a $G$-invariant subforest of $T$ is $\mathcal{G}$-elliptic if and only if it collapses to a $\mathcal{G}$-elliptic subgraph of $T/G$.

We also observe that, by the definition of the fundamental group, a subgraph of $T/G$ will be $G$-elliptic if and only if each component is a tree containing at most one non-free vertex.

Let $f:T\to \alpha T$ be a topological representative. Topological representatives are equivariant, hence $f$ induces a well-defined map $\varphi :T/G\mapsto T/G$. (Observe that since orbits in $T$ and $\a T$ are the same, $T/G = \a T/G$).

Suppose that $f$ is an isometry. Then by Proposition \ref{prop:isometries_are_graph_autos}, $f$ is a graph automorphism. It follows that $\varphi$ is also an isometric graph automorphism - in particular, it is invertible. Thus we can think of the cyclic group $\langle \varphi \rangle$ as acting on $T/G$.

$\varphi$ can be used in an equivalent form of the reducibility test (Theorem \ref{thm:reducibility_test}), this time using the quotient graph:

\begin{thm}\label{thm:reducibility_test_2}
	Let $\mathcal{G}$ be a proper free factor system for a group $G$, let  $\alpha\in\text{Out}(G,\mathcal{G})$, and let $T\in\mathcal{O}$.
	
	Suppose that $\alpha$ can be topologically  represented by a $G$-equivariant simplicial map $f:T\to \alpha T$, and there exists a proper $\varphi$-invariant, $\mathcal{G}$-hyperbolic subgraph of $T/G$. Then $\alpha$ is reducible with respect to $\mathcal{G}$.
\end{thm}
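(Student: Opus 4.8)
The plan is to reduce this statement to the already-proved reducibility test of Theorem \ref{thm:reducibility_test} by lifting the hypothesised $\varphi$-invariant subgraph of $T/G$ back up to a suitable subforest of $T$. Since $f$ is an isometric topological representative, Proposition \ref{prop:isometries_are_graph_autos} tells us $f$ is a graph automorphism, so the induced map $\varphi:T/G\to T/G$ is a genuine isometric graph automorphism; in particular $\varphi$ is invertible and $\langle\varphi\rangle$ acts on $T/G$, exactly as set up in the preceding discussion. So suppose we are given a proper, $\varphi$-invariant, $\mathcal{G}$-hyperbolic subgraph $W$ of $T/G$.

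First I would let $p:T\to T/G$ be the quotient projection and set $F := p^{-1}(W)$, the full preimage of $W$ in $T$ (taking edges, together with their incident vertices, so that $F$ respects the simplicial structure as subforests are required to do in this paper). Being a preimage under the $G$-equivariant map $p$, this $F$ is automatically $G$-invariant. I would then check $F$ is $f$-invariant: because $\varphi\circ p = p\circ f$ and $W$ is $\varphi$-invariant, we have $p(f(F)) = \varphi(p(F)) = \varphi(W) = W$, so $f(F)\subseteq p^{-1}(W)=F$ (using that $f$ is simplicial, hence sends edges to edges). Properness of $F$ follows from properness of $W$, since $p$ is surjective and $W\neq T/G$ means some edge orbit is omitted from $F$.

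Next I would transfer the $\mathcal{G}$-hyperbolicity. The definitions in this section are designed precisely for this: a $G$-invariant subforest of $T$ is $\mathcal{G}$-elliptic if and only if it collapses to a $\mathcal{G}$-elliptic subgraph of $T/G$, as noted just before the statement. Since $W$ is $\mathcal{G}$-hyperbolic by hypothesis, its preimage $F$ cannot be $\mathcal{G}$-elliptic, i.e. $F$ is $\mathcal{G}$-hyperbolic — some component of $F$ has stabiliser not lying in $\mathcal{G}$. Having produced a proper, $f$-invariant, $G$-invariant, $\mathcal{G}$-hyperbolic subforest of $T$, I would invoke Theorem \ref{thm:reducibility_test} directly to conclude that $\alpha$ is reducible with respect to $\mathcal{G}$, which is exactly the desired conclusion.

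The main obstacle I anticipate is purely bookkeeping rather than conceptual: verifying carefully that $F=p^{-1}(W)$ genuinely qualifies as a subforest in the restrictive sense used here (that the preimage of a subgraph closes up correctly under the $G$-action and respects the chosen simplicial structure, including any degree-2 midpoint vertices introduced to kill inversions), and confirming that the correspondence between $\mathcal{G}$-hyperbolicity upstairs and downstairs runs in the direction I need. These are exactly the compatibilities the definitions were set up to guarantee, so I expect each check to be short; the only real subtlety is making sure $\varphi$-invariance of $W$ upstairs yields $f$-invariance of $F$ — which it does, via the intertwining relation $\varphi\circ p = p\circ f$ — rather than merely invariance under some lift of $\varphi$.
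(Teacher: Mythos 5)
Your proof is correct and is precisely the intended argument: the paper states this theorem without its own proof, presenting it as an equivalent reformulation of Theorem \ref{thm:reducibility_test}, and the reformulation is justified exactly by pulling the subgraph back to the $G$-invariant subforest $p^{-1}(W)$ and checking $f$-invariance via $p\circ f=\varphi\circ p$, properness, and $\mathcal{G}$-hyperbolicity via the stated correspondence between elliptic subforests upstairs and elliptic subgraphs downstairs, just as you do. One small point: you open by assuming $f$ is an isometry (so that $\varphi$ is invertible and $\langle\varphi\rangle$ acts), but the theorem only hypothesises a $G$-equivariant simplicial topological representative; fortunately your argument never uses invertibility, only that $\varphi$ is well defined and that $\varphi(W)\subseteq W$, so the extra assumption can simply be deleted.
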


This form of the reducibility test eliminates the need to check for $G$-invariance. Note that a subgraph of $T/G$ is $\varphi$-invariant if and only if it is invariant under the action of $\langle \varphi \rangle$.

\begin{defn}
	We say a graph $Y$ is a \emph{star} if it is a tree and there exists a vertex $w$ which is incident to every edge of $Y$.
\end{defn}

\begin{lem}\label{lem:degree_1_and_2_v.groups}
	Let $T\in \mathcal{O}$, and let $X$ be a quotient graph of groups for $T$. Then all the vertices of degree 1 or 2 in $X$ will have non-trivial vertex groups.
\end{lem}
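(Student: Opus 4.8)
The plan is to reduce this statement to the corresponding facts about the tree $T$ itself, namely Proposition~\ref{prop:no_degree_1} (minimal $G$-trees have no degree~1 vertices) and Proposition~\ref{prop:no_free_degree_2} ($G$-trees have no free degree~2 vertices). I would argue by contraposition: assume a vertex $v$ of $X$ of degree~1 or~2 has trivial vertex group, and produce a vertex of $T$ violating one of these propositions. Since $T\in\mathcal{O}$ is minimal, cocompact and edge-free, both propositions are available.

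The key observation is how the degree of $v$ in the quotient graph $T/G$ relates to the degree of a lift $\tilde v \in T$. Recall that the vertex group of $v$ in $X$ is $\text{stab}(\tilde v)$, so a trivial vertex group means $\text{stab}(\tilde v)=1$. Now two oriented edges $e_1,e_2$ of $T$ emanating from $\tilde v$ project to the same oriented edge of $T/G$ precisely when $e_1\cdot g = e_2$ for some $g\in G$; since both have initial vertex $\tilde v$, applying the right action gives $\tilde v\cdot g = \tilde v$, i.e. $g\in\text{stab}(\tilde v)$. Hence the fibres of the projection on the oriented edges at $\tilde v$ are exactly the $\text{stab}(\tilde v)$-orbits. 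When $\text{stab}(\tilde v)=1$ these orbits are singletons, so the degree of $\tilde v$ in $T$ equals the degree of $v$ in $X$.

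Combining these, if $v$ has degree~1 and trivial vertex group, then $\tilde v$ is a degree~1 vertex of $T$, contradicting Proposition~\ref{prop:no_degree_1}. If $v$ has degree~2 and trivial vertex group, then $\tilde v$ is a free vertex of degree~2 in $T$, contradicting Proposition~\ref{prop:no_free_degree_2}. In either case we reach a contradiction, so every vertex of $X$ of degree~1 or~2 must have non-trivial vertex group.

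The only genuinely delicate point is the edge-counting in the middle paragraph: one must use the Serre-graph convention (counting oriented edges, or equivalently half-edges, at $\tilde v$) consistently, and confirm that the projection $p:T\to T/G$ restricts to a bijection between oriented edges at $\tilde v$ and oriented edges at $v$ exactly when $\text{stab}(\tilde v)$ is trivial. Once this degree-preservation under a trivial stabiliser is pinned down, the remainder is an immediate appeal to the two propositions.
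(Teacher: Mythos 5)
Your proof is correct and takes essentially the same route as the paper, which simply asserts that the lemma ``follows directly from Propositions \ref{prop:no_degree_1} and \ref{prop:no_free_degree_2}''; your middle paragraph supplies exactly the degree-preservation argument (the projection restricting to a bijection on oriented edges at a lift with trivial stabiliser) that the paper leaves implicit.
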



The proof of this lemma follows directly from Propositions \ref{prop:no_degree_1} and \ref{prop:no_free_degree_2}.

\begin{thm}\label{thm:permuting_edge_orbits}
	Let $\mathcal{G}$ be a free factor system for a group $G$, and let $\a\in\text{Out}(G,\mathcal{G})$ be an irreducible automorphism with $\l_\a =1$. Let $T\in\text{Min}(\a)$, and let $f:T\to \a T$ be an equivariant isometry ($f$ exists by Theorem \ref{thm:isom_top_rep_exists}).
	
	Then $f$ cyclically permutes the $G$-orbits of edges in $T$.
\end{thm}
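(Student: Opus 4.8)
The plan is to descend to the finite quotient graph $Y = T/G$ and recast the statement combinatorially. Since $f$ is an equivariant isometry, Proposition \ref{prop:isometries_are_graph_autos} makes it a graph automorphism of $T$, and by Lemma \ref{lem:alpha_preserves_orbits} the $G$-orbits in $T$ and in $\a T$ coincide, so $f$ descends to an isometric graph automorphism $\varphi\colon Y\to Y$ permuting the edges of $Y$, i.e.\ the $G$-orbits of edges of $T$. As $Y$ is finite, $\langle\varphi\rangle$ is a finite cyclic group acting on $Y$, and saying that $f$ cyclically permutes the edge-orbits is exactly saying that $\langle\varphi\rangle$ acts transitively on $E(Y)$ (a transitive action of a cyclic group on a finite set is a single cycle). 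So the statement reduces to: $E(Y)$ is a single $\langle\varphi\rangle$-orbit.

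The engine is the quotient form of the reducibility test (Theorem \ref{thm:reducibility_test_2}) together with irreducibility of $\a$. Since $f$ is a $G$-equivariant simplicial map topologically representing $\a$, the contrapositive of Theorem \ref{thm:reducibility_test_2} tells us that no proper $\varphi$-invariant subgraph of $Y$ can be $\mathcal{G}$-hyperbolic; equivalently, every proper $\varphi$-invariant subgraph is $\mathcal{G}$-elliptic, that is, a forest each of whose components is a tree with at most one non-free vertex. On the other hand, because $\mathcal{G}$ is proper ($G\notin\mathcal{G}$) and $T$ is minimal and connected, $Y$ itself is $\mathcal{G}$-hyperbolic. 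I argue by contradiction: suppose $E(Y)$ splits into orbits $E_1,\dots,E_m$ with $m\ge 2$.

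The strategy is then to produce a witness to the $\mathcal{G}$-hyperbolicity of $Y$ that is confined to fewer than all $m$ orbits. A witness is either an embedded cycle or an embedded path joining two distinct non-free vertices; $\varphi$ carries cycles to cycles and preserves non-free vertices, so the $\varphi$-saturation $W$ of any witness is again $\varphi$-invariant and $\mathcal{G}$-hyperbolic. If the witness meets only the orbits in some proper subset $S\subsetneq\{1,\dots,m\}$, then $W$ is spanned by $\bigcup_{i\in S}E_i$, hence is a proper subgraph, contradicting the previous paragraph. To build such a witness, fix one orbit $E_m$; by the above $Y\setminus E_m$ is a $\mathcal{G}$-elliptic forest, and contracting each of its components produces a graph $\bar Y$ whose edge set is exactly the single $\langle\varphi\rangle$-orbit $E_m$, on which $\langle\varphi\rangle$ acts edge-transitively. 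When $Y$ is a tree, $\bar Y$ is a tree carrying an edge-transitive cyclic action, hence a star or a single edge; one then uses that every leaf and every degree-$2$ vertex is non-free (Proposition \ref{prop:no_degree_1}, Proposition \ref{prop:no_free_degree_2} and Lemma \ref{lem:degree_1_and_2_v.groups}) to force each contracted component to collapse to a single vertex, whence all of $E(Y)$ lies in one orbit and $m=1$, a contradiction. These same degree constraints also rule out, at the outset, any edge joining two non-free vertices, since its orbit would already span a proper $\mathcal{G}$-hyperbolic $\varphi$-invariant subgraph.

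The main obstacle is the case where $Y$ contains a cycle, for there the contracted components need not collapse to points and $\bar Y$ need not be a star. Here I expect to exploit that $Y\setminus E_i$ is a forest for \emph{every} $i$, so that removing any single orbit destroys all of $b_1(Y)$; combining this with the edge-transitivity of $\langle\varphi\rangle$ on $\bar Y$ and the no-free-leaf and no-free-degree-$2$ constraints should force a cycle, or a pair of non-free vertices, to live inside the span of a single orbit, again producing a proper $\varphi$-invariant $\mathcal{G}$-hyperbolic subgraph. Making this extraction work uniformly, independently of how the non-free vertices and the cycles are distributed among the orbits, is the delicate heart of the argument.
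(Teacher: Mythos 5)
Your setup coincides with the paper's: descend to $Y=T/G$, note that $f$ induces an isometric graph automorphism $\varphi$ of $Y$, and use the quotient form of the reducibility test (Theorem \ref{thm:reducibility_test_2}) to conclude that, if the edge set splits into $m\geq 2$ orbits $E_1,\dots,E_m$, then both the subgraph spanned by a single orbit $E_m$ and the subgraph spanned by its complement are proper $\varphi$-invariant subgraphs and hence must each be a $\mathcal{G}$-elliptic forest (each component a tree with at most one non-free vertex). Your ``witness'' formulation and the observation that $Y\setminus E_i$ is a forest for every $i$ are both correct. The problem is that everything up to this point is the easy half of the argument; the content of the theorem is precisely that these two spans cannot \emph{both} be elliptic forests, and that is the part you have not proved. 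You handle only the case where $Y$ is a tree, and even there the sketch is underspecified: ``force each contracted component to collapse to a single vertex'' does not follow from the degree constraints on $Y\setminus E_m$ alone --- you also need the ellipticity of the span of $E_m$ to rule out a hub-component vertex carrying two or more spokes (two degree-one spoke vertices in one component of that span already give hyperbolicity). For the general case you write ``I expect to exploit\dots'' and ``should force\dots'' and explicitly defer ``the delicate heart of the argument.'' That deferred part is where essentially all of the paper's proof lives: the dichotomy between orbit-components that are single edges versus stars, the spoke/hub vertex analysis, the further decomposition of the complement into the subforests $B'$, $C$, $D$, Claim (ii) about components of $D$ meeting $C$ in at most one point, and the counting argument ($2s-l>s$, with $s$ the number of spokes and $l$ the number of star components) that finally produces a component with two non-free vertices.

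Concretely, the missing step is: assuming the span $A$ of one $\varphi$-orbit of edges is an elliptic forest and its complement $B$ is an elliptic forest, derive a contradiction. Your framework reduces the theorem to exactly this statement but does not establish it outside the tree case. Note also that the dichotomy that actually organises the paper's argument is not ``$Y$ is a tree versus $Y$ has a cycle'' but ``$B$ is a forest or not'' (the latter being immediate) followed by a case analysis on the shape of the components of $A$; recasting your remaining work in those terms would likely be the most direct way to close the gap, but as written the proposal is an outline with its central combinatorial lemma unproved.
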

\begin{proof}
	$f$ induces a map $\varphi$ on $T/G$, and the cyclic group $\langle \varphi \rangle$ acts on $T/G$. The theorem statement is equivalent to saying that $\varphi$ cyclically permutes the edges of $T/G$.
	
	Let $e$ be an edge in $T/G$. We define two subgraphs of $T/G$:
	\begin{itemize}
		\item Let $A$ be the subgraph of $T$ with $E(A) = e\cdot\langle \varphi \rangle$ and with $V(A)$ equal to the set of vertices incident to $E(A)$.
		\item Let $B$ be the subgraph of $T$ with edge set $E(B) = E(T)-e\cdot\langle \varphi \rangle$ and with $V(B)$ equal to the set of vertices incident to $E(B)$.
	\end{itemize}
	Observe that these are both $\langle \varphi \rangle$-invariant.
	
	Suppose that $\varphi$ does not cyclically permute the edges of $T/G$. This means that $A$ and $B$ are both proper subgraphs of $T/G$. We shall show that at least one of the two subgraphs is $\mathcal{G}$-hyperbolic. This will mean that $\a$ is reducible by Theorem \ref{thm:reducibility_test_2}, giving us a contradiction.
	
	In more detail, we assume that $A$ is $\mathcal{G}$-elliptic. Then $A$ is a forest such that every component contains at most one non-free vertex. If $B$ is not a forest, then $B$ is immediately $\mathcal{G}$-hyperbolic, so assume that it is a forest. We shall show that some component of $B$ contains at least 2 non-free vertices, and hence $B$ is $\mathcal{G}$-hyperbolic.
	
	To begin we note that by Lemma \ref{lem:degree_1_and_2_v.groups}, $X$ does not contain any free vertices of degree 1 or 2. Thus we make the following claim:
	
	\begin{center}		
		\textbf{Claim (i):} Let $v$ be a free vertex of $T/G$. Then $\text{deg}_{T/G}(v) = \text{deg}_A(v) + \text{deg}_{B}(v)\geq 3$. It follows that, if $\text{deg}_A(v)=1$ or $2$ \emph{or} $\text{deg}_B(v)=1$ or $2$, then $v\in A\cap B$. Additionally, if $\text{deg}_A(v)=1$, then $\text{deg}_{B}(v)\geq 2$, and if $\text{deg}_B(v)=1$, then $\text{deg}_{A}(v)\geq 2$
	\end{center}
	
	Now, $\langle \varphi \rangle$ acts via isometries, and since $A$ is the $\langle \varphi \rangle$-orbit of a single edge, $\langle \varphi \rangle$ acts transitively on the components of $A$. Hence the components of $A$ are all isometric to each other, and we can divide $A$ into two cases:
	
	\textbf{Case 1: Each component of $A$ is a single edge}
	
	$B$ is a finite forest, therefore each component of $B$ has at least two vertices of $B$-degree 1. By Claim (i), if any of these vertices are free, then they must lie in $A\cap B$ and have $A$-degree of at least 2. However, all vertices in $A$ have $A$-degree 1. Hence the $B$-degree 1 vertices are all non-free, and $B$ is $\mathcal{G}$-hyperbolic.
	
	\textbf{Case 2: Each component of $A$ contains more than one edge}
	
	By definition, $A$ contains at most two $\langle \varphi \rangle$-orbits of vertices. Since $A$ is a finite forest, some vertices in $V(A)$ will have $A$-degree 1, and since each component of $A$ contains more than one edge, some vertices in $V(A)$ will have $A$-degree strictly greater than 1. $\langle \varphi \rangle$ acts via graph automorphisms (by Proposition \ref{prop:isometries_are_graph_autos}), and $A$ is $\langle \varphi \rangle$-invariant, therefore vertices in the same $\langle \varphi \rangle$-orbit will have the same $A$-degree. Thus $A$ contains exactly two $\langle \varphi \rangle$-orbits of vertices.
	
	Furthermore, $E(A)$ is the $\langle \varphi \rangle$-orbit of a single edge, so the incident vertices of this edge are representatives for our two vertex orbits. Thus every edge in $A$ must have exactly one incident vertex with $A$-degree 1, and hence $A$ is in fact a disjoint union of stars. We shall refer to the $A$-degree 1 vertices as the \emph{spoke} vertices. The remaining vertices, at the centre of each star, shall be called the \emph{hub} vertices.
	
	By the equivariance of $f$, $\langle \varphi \rangle$ sends free vertices to free vertices, and non-free vertices to non-free vertices. The spoke vertices all lie in the same $\langle \varphi \rangle$-orbit, and each component of $A$ contains at least 2 spoke vertices. Therefore, since $A$ is $\mathcal{G}$-elliptic, the spoke vertices must all be free. Thus, by Claim (i), they must lie in $B$, and have $B$-degree at least 2. (*)
	
	$\langle \varphi \rangle$ acts transitively on the spoke vertices. Therefore $\langle \varphi \rangle$ acts transitively on the components of $B$ which contain the spoke vertices, and hence these components are all isometric to each other. We shall write $B'$ to denote the subforest of $B$ consisting of these components. (Observe that, for any vertex $v\in B'$, $\text{deg}_B(v) = \text{deg}_{B'}(v)$).
	
	We divide into two cases once again:
	
	\textbf{Subcase 1: Each component of $B'$ contains exactly one spoke vertex}
	
	Let $s$ be the number of spoke vertices, and let $l$ be the number of components of $A$. Then $s\geq 2l$.
	
	$B'$ has exactly $s$ components. These components are finite trees, so they will each contain at least 2 vertices of $B'$-degree 1. Thus $B'$ has at least $2s$ vertices with $B'$-degree 1. By Claim (i), any of these vertices which are free must lie in $A$ and have $A$-degree at least 2. However, the only vertices with $A$-degree at least 2 are the $l$ hub vertices. This leaves at least $2s-l$ vertices in $B'$ which must therefore be non-free. $2s-l>s = \text{number of components of } B'$, therefore some component of $B'$ must contain two or more of these non-free vertices. Therefore $B'$, and hence $B$, is $\mathcal{G}$-hyperbolic.
	
	\textbf{Subcase 2: Each component of $B'$ contains more than one spoke vertex}
	
	We divide $B'$ into two subforests, $C$ and $D$:
	\begin{itemize}
		\item Let $B'_1,\ldots, B'_n$ be the components of $B'$. For each $i=1,\ldots,n$, let $C_i$ be the unique minimal subtree of $B'_i$ which contains all the spoke vertices in $B'_i$. Let $C:=\bigcup_{i=1}^n C_i$. Since $B'$ and the set of spoke vertices are $\langle \varphi \rangle$-invariant, $C$ is also $\langle \varphi \rangle$-invariant.
		
		\item Define $D$ to be the subforest of $B'$ consisting of the edge set $E(B') - E(C)$, together with all vertices incident to this edge set. Since $B'$ and $C$ are $\langle \varphi \rangle$-invariant, $D$ is also $\langle \varphi \rangle$-invariant.
	\end{itemize}
	
	By minimality of the $C_i$'s, at least one spoke vertex has $C$-degree 1. Since $C$ is $\langle \varphi \rangle$-invariant and the spoke vertices lie in the same $\langle \varphi \rangle$-orbit, this implies that all the spoke vertices have $C$-degree 1. However, recalling that the spoke vertices have $B$-degree at least 2 (see (*)), this tells us that the spoke vertices are all incident to an edge in $D$ (and hence the spoke vertices themselves are all in $D$).
	
	\begin{center}		
		\textbf{Claim (ii):} A component of $D$ cannot contain more than one point in $C\cap D$.
	\end{center}
	
	\emph{Proof of Claim (ii).} Let $v,w\in C\cap D$, and suppose that $v$ and $w$ lie in the same component of $D$. Then they lie in the same component of $B'$, and hence the same component of $C$. Therefore there exists a unique reduced path $\gamma_D$ from $v$ to $w$ in $D$ , and a unique reduced path $\gamma_C$ from $v$ to $w$ in $C$.
	
	However, $B'$ is a forest, therefore $\gamma_D = \gamma_C$. By definition of $D$, there are no edges in $C\cap D$. Hence both paths are trivial, and $v=w$. This ends the proof of Claim (ii). \qed
	
	In particular, Claim (ii) implies that each spoke vertex lies in a unique component of $D$. Let $D'$ be defined as the subforest of $D$ consisting only of the components which contain spoke vertices. Then, if we let $s$ be the number of spoke vertices, $D'$ will have $s$ components. (Additionally, for any $v\in D'$, $\text{deg}_D(v) = \text{deg}_{D'}(v)$).
	
	Each component of $D'$ will contain at least two vertices of $D'$-degree 1. By Claim (ii), at least one of these will not lie in $C\cap D$, and hence it will also have $B'$-degree 1. Furthermore, since $\langle \varphi \rangle$ acts transitively on the spoke vertices, it will act transitively on the components of $D'$. Thus there exists a $\langle \varphi \rangle$-orbit of at least $s$ vertices with $B'$-degree 1; at least one in each component of $D'$.
	
	By Claim (i), if this orbit of vertices is free, then it must lie in $A$ and have $A$-degree at least 2. However, the only $\langle \varphi \rangle$-orbit of vertices with $A$-degree at least 2 are the $l$ hub vertices. $s\geq 2l$, therefore these cannot be the same orbit. Hence our orbit of $B'$-degree 1 vertices must be non-free.
	
	Each component of $B'$ contains more than one spoke vertex. Therefore each component of $B'$ contains more than one component of $D'$, and hence more than one of our non-free vertices. Thus $B'$ is $\mathcal{G}$-hyperbolic.
\end{proof}

\section{Main Theorem}

For the duration of this section, let $\mathcal{G}$ be a free factor system for a group $G$, and let \\$\mathcal{O} = \mathcal{O}(G,\mathcal{G})$.

\begin{defn}
	The \emph{covolume 1 slice of $\mathcal{O}$}, denoted $\mathcal{O}_1$, is defined to be the subspace of covolume 1 trees in $\mathcal{O}$.
\end{defn}



\begin{defn}
	Let $\a\in\text{Out}(G,\mathcal{G})$. In a similar manner to $\mathcal{O}$, we define the \emph{minimally displaced set} in $\mathcal{O}_1$ to be $\text{Min}_1(\a) = \{T\in\mathcal{O}_1\mid \Lambda_R(T,\a T) = \lambda_\a \}$ and we define \\$\text{Fix}_1(\a) = \{T\in\mathcal{O}_1\mid T \sim \a T \}$.
\end{defn}

\begin{rem}
	For $T\in \mathcal{O}$ and $\mu>0$, let us write $\mu T$ to denote the $G$-tree $(T, \mu d_T, \cdot)$. One can show that for all $T, S\in \mathcal{O}$, $\Lambda_R(T,S) = \Lambda_R(\mu T,\mu S)$ - that is to say, stretching factors are invariant under rescaling the volume of both $G$-trees. Additionally, $T$ and $\a T$ have the same volume for all $T \in \mathcal{O}$, so by rescaling one, we rescale the other. Thus we observe the following:
	\begin{align*}
		\text{Min}(\a) = \{\mu T\in\mathcal{O} \mid T\in \text{Min}_1(\a), \mu > 0 \}\\
		\text{Fix}(\a) = \{\mu T \in\mathcal{O} \mid T\in \text{Min}_1(\a), \mu > 0 \}
	\end{align*}
	It then follows from Corollary \ref{cor:minsetisfixset} that $\text{Min}_1(\a) = \text{Fix}_1(\a)$.
\end{rem}

Let $T\in \mathcal{O}_1$. The metric on $T$ can be completely described by the length of one edge from each $G$-orbit - or equivalently, the lengths of the edges of $T/G$. Hence, if there are $n$ edge orbits with lengths $x_1,\ldots,x_n$, then the open simplex $\{(x_1,\ldots,x_n) \mid x_1+\ldots x_n = 1, x_i> 0 \hspace{7pt} \forall i\}$ describes the set of all possible metrics on $T$. Repeating this for every tree in $\mathcal{O}_1$ allows us to think of $\mathcal{O}_1$ as a union of open simplices.

(Equivalently, the same structure can be thought of a simplicial complex with some missing faces. These missing faces are a result of edges of $T$ which, were their lengths reduced to zero, would create new vertices whose stabilisers were not in $\mathcal{G}$, and hence the resulting tree could not lie in $\mathcal{O}$.)

Let $\Delta$ be an open simplex in $\mathcal{O}_1$. We write $\overline{\Delta}$ to denote the closure of $\Delta$ in $\mathcal{O}_1$. Note that this is not, in general, a closed simplex.

\begin{defn}(Adapted from \cite[p.19, Def. 5.1]{francavigliamartino2018b})
	
	Let $T,S\in\mathcal{O}_1$. A \emph{simplicial path} between $T$ and $S$ is given by:
	
	\begin{itemize}
		\item[(i)] A finite sequence of points $T=T_0,T_1,\ldots,T_k = S\in\mathcal{O}_1$ such that $\forall i = 1\ldots k$ there is a simplex $\Delta_i$ such that $X_{i-1}$ and $X_i$ both lie in $\overline{\Delta_i}$.
		
		\item[(ii)] Euclidean segments $\overline{X_{i-i}X_i}\subseteq \overline{\Delta_i}$. (Here Euclidean segment refers to the coordinates $(x_1,\ldots,x_n)$ on $\overline{\Delta_i}$)
	\end{itemize}
\end{defn}

\begin{defn}
	We say that a set $\chi\subseteq\mathcal{O}_1$ is \emph{connected by simplicial paths} if for any $x,y\in \chi$, there is a simplicial path between $x$ and $y$ which is contained entirely in $\chi$.
\end{defn}


\begin{lem}\label{lem:disconnected_centres}
	A simplicial path in $\mathcal{O}_1$ which only passes through the centres of simplices is a single point.
\end{lem}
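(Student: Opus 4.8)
The plan is to exploit the fact that inside any single closed simplex $\overline{\Delta}$ there are only finitely many points that can be the centre of a simplex of $\mathcal{O}_1$, and then to use that a Euclidean segment contained in a finite set must be a single point. Recall that the centre of an open simplex is the unique point where all of its edge coordinates $x_1,\dots,x_n$ are equal, namely $(1/n,\dots,1/n)$.

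First I would fix attention on one segment $\overline{T_{i-1}T_i}\subseteq\overline{\Delta_i}$ of the simplicial path. Since $\mathcal{O}_1$ is a union of \emph{open} simplices, with each point lying in the relative interior of exactly one of them, the closure $\overline{\Delta_i}$ is the union of $\Delta_i$ together with those of its faces that survive in $\mathcal{O}_1$, and each such face is itself one of these open simplices. An $(n-1)$-dimensional simplex has only finitely many faces, so $\overline{\Delta_i}$ meets only finitely many open simplices; as each open simplex has exactly one centre and distinct open simplices have distinct centres (their nonzero coordinates have distinct supports), the set of simplex centres contained in $\overline{\Delta_i}$ is finite.

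Next I would verify that the global hypothesis ``passes only through centres'' is already detected inside $\overline{\Delta_i}$. If a point $p$ on the segment is the centre of some simplex $\Delta''$ of $\mathcal{O}_1$, then $p$ lies in the relative interior of $\Delta''$; but $p$ also lies in the relative interior of exactly one face of $\Delta_i$, and since the open simplices partition $\mathcal{O}_1$ these two open simplices coincide. Hence $p$ is one of the finitely many centres identified above, so every point of $\overline{T_{i-1}T_i}$ lies in a finite set. Finally, a Euclidean segment is connected, so a segment contained in a finite set must reduce to a single point, giving $T_{i-1}=T_i$; applying this to each $i=1,\dots,k$ yields $T_0=\cdots=T_k$, and the path is a single point. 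I do not anticipate a serious obstacle; the only point requiring care is the bookkeeping that a simplex centre is genuinely a relative-interior point of that simplex and that the open simplices partition $\mathcal{O}_1$, which is precisely what makes the centres in $\overline{\Delta_i}$ finite and isolated.
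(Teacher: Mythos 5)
Your proof is correct and follows essentially the same route as the paper's: both arguments come down to the fact that the simplex centres form a discrete set inside each closed simplex $\overline{\Delta_i}$, so a connected Euclidean segment lying entirely in that set must be a single point. The paper states this as a one-line observation (a nontrivial segment starting at a centre must leave the set of centres), whereas you justify the discreteness explicitly via the finiteness of the faces of $\overline{\Delta_i}$ and the fact that the open simplices partition $\mathcal{O}_1$; this is a more careful write-up of the same idea rather than a different approach.
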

\begin{proof}
	Any such simplicial path must begin at the centre of an open simplex in $\mathcal{O}_1$. Observe that any nontrivial Euclidean segment which begins at the centre of a simplex must pass through a point which does not lie at the centre of a simplex. Thus the entire simplicial path is trivial.
\end{proof}

We can now state the main theorem of this paper.

\MSonepoint

\begin{proof}
	By Theorem \ref{thm:irreduciblerealisesdisplacement}, $\text{Min}(\a)$, and hence $\text{Min}_1(\a)$, is non-empty.
	
	Let $T\in \text{Min}_1(\a)$. Then by Theorem \ref{thm:isom_top_rep_exists} there exists an equivariant isometry $f:T\to \a T$. By Theorem \ref{thm:permuting_edge_orbits}, $f$ cyclically permutes the edges of $T$, which means that all the edges in $T$ must have the same length, and hence $T$ must lie at the centre of an open simplex in $\mathcal{O}_1$. Thus  $\text{Min}_1(\a)$ is a subset of the set of simplex centres.
	
	It is shown in \cite[p.19, Cor 5.4]{francavigliamartino2018b} that $\text{Min}_1(\a)$ is connected by simplicial paths. However, by Lemma \ref{lem:disconnected_centres}, a simplicial path in $\mathcal{O}_1$ which only passes through the centres of simplices is a single point. It follows that $\text{Min}_1(\a)$ is a single point.
\end{proof}

\begin{cor}
	Let $\a\in\text{Out}(G,\mathcal{G})$ be irreducible and displacement 1. Then $\text{Min}(\a) = \text{Fix}(\a)$ is a single line.
\end{cor}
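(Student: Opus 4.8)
The plan is to deduce this corollary directly from the main theorem, using the scaling relations recorded in the remark that immediately precedes Theorem~\ref{thm:min_set_single_point}. The equality $\text{Min}(\a)=\text{Fix}(\a)$ requires no new argument: it is precisely the content of Corollary~\ref{cor:minsetisfixset}, valid for any irreducible, displacement~$1$ automorphism. So the only thing left to establish is that this common set is a single line.

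To see this, I would first apply Theorem~\ref{thm:min_set_single_point} to conclude that $\text{Min}_1(\a)$ is a single point, say $T_0$. The remark preceding that theorem records the identity $\text{Min}(\a)=\{\mu T \mid T\in\text{Min}_1(\a),\ \mu>0\}$, where $\mu T$ denotes $T$ with its metric scaled by $\mu$; this identity rests on the scaling invariance $\Lambda_R(T,S)=\Lambda_R(\mu T,\mu S)$ together with the fact that $T$ and $\a T$ always share the same volume. Substituting the single tree $T_0$ then gives $\text{Min}(\a)=\{\mu T_0 \mid \mu>0\}$.

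Finally, I would interpret this one-parameter family as a line. Under the embedding of $\mathcal{O}$ into $\R^G$ by translation length functions, rescaling $T_0$ by $\mu$ multiplies its length function by $\mu$, so $\{\mu T_0\mid\mu>0\}$ is exactly the set of positive scalar multiples of the single vector $l_{T_0}\in\R^G$ --- a ray through the origin, which is what the statement means by ``a single line''. I do not anticipate any genuine obstacle: the corollary is a formal consequence of the main theorem and the homogeneity of the stretching factor. The only point worth flagging is the minor terminological gap between a ray and a full line, which is harmless here since the content is entirely captured by the one-parameter family of rescalings of $T_0$.
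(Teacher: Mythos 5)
Your proposal is correct and follows exactly the paper's own argument: the paper's proof likewise cites Theorem~\ref{thm:min_set_single_point} together with the identity $\text{Min}(\a) = \{\mu T\in\mathcal{O} \mid T\in \text{Min}_1(\a),\ \mu > 0\}$ from the preceding remark. Your additional observations about the $\R^G$ embedding and the ray/line terminology are harmless elaborations of the same reasoning.
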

\begin{proof}
	Follows directly from Theorem \ref{thm:min_set_single_point} and the fact that\\ $\text{Min}(\a) = \{\mu T\in\mathcal{O} \mid T\in \text{Min}_1(\a), \mu > 0 \}$
\end{proof}

There exists a space similar to $\mathcal{O}_1$ called the projectivized space $\mathcal{PO}$, where instead of taking the covolume one subspace of $\mathcal{O}$, one takes a quotient space of $\mathcal{O}$ by identifying all $G$-trees in the sets $\{(T,\lambda d_T,\cdot)\mid \lambda\in \R\}$ for each $T\in\mathcal{O}$.

When choosing a $G$-tree to represent a point in $\mathcal{PO}$, one usually takes the unique covolume one $G$-tree. In this way, we can construct a natural bijection between $\mathcal{O}_1$ and $\mathcal{PO}$.

The displacement $\lambda_\a$ of an automorphism is invariant under rescaling of the metrics $d_T$, hence we can define the minimally displaced set $\text{Min}_\mathcal{P}(\a)$ in $\mathcal{PO}$. The set $\text{Min}_1(\a)$ is a set of representatives for $\text{Min}_\mathcal{P}(\a)$, thus Theorem \ref{thm:min_set_single_point} shows that $\text{Min}_\mathcal{P}(\a)$ is also a single point.


\end{document}